\numberwithin{equation}{section}
\numberwithin{figure}{section}
\theoremstyle{plain}
\newtheorem{thm}{\protect\theoremname}
  \theoremstyle{plain}
  \newtheorem{lem}[thm]{\protect\lemmaname}
  \theoremstyle{remark}
  \newtheorem*{rem*}{\protect\remarkname}
  \theoremstyle{definition}
  \newtheorem*{example*}{\protect\examplename}
  \theoremstyle{definition}
  \newtheorem{example}[thm]{\protect\examplename}
  \providecommand{\examplename}{Example}
  \providecommand{\lemmaname}{Lemma}
  \providecommand{\remarkname}{Remark}
\providecommand{\theoremname}{Theorem}
\begin{document}
\global\long\def\dome{\text{dom}\mathcal{E}}

\global\long\def\doml{\text{dom}\Delta}

\title{Infinite Propagation Speed for Wave Solutions on Some P.C.F. Fractals}

\author{Yin-Tat Lee}
\begin{abstract}
The finite difference method for the wave equation on p.c.f. fractals
suggests that the propagation speed of the wave equation may be infinite.
We prove this is indeed true if the heat kernel satisfies a sub-Gaussian
lower bound. Furthermore, we provide a sub-Gaussian upper bound for
the solution of the wave equation given the heat kernel sub-Gaussian
upper bound.
\end{abstract}
\maketitle

\section{Introduction}

In \cite{dalrymple1999fractal}, Dalrymple, Strichartz, and Vinson
pointed out that there is no maximum propagation speed on the Sierpinski
Gasket (SG) because of a scaling property of SG. In other words, there
is no $C$ such that for all $x$ and $t>0$, the fundamental solution
of the wave equation at point $x$ and time $t$ is supported in $B_{Ct}(x)$.
However, it does not rule out the possibility that the fundamental
solution is supported in $B_{f(t)}(x)$ for some continuous function
$f$ such that $f(0)=0$. 

In this paper, we first provide an error analysis for the finite difference
method on p.c.f. fractals with regular harmonic structure. Let $u$
be a solution of the wave equation on the fractal $K$, and let $u_{m}$
be the  solution on the level $m$ approximation $V_{m}$ of $K$.
In Theorem \ref{thm:na error}, we show that $u_{m}(x,t)\approx u(x,h_{m}t)$
where $h_{m}$ is a time renormalization factor. Interestingly, the
$h_{m}$ decreases faster than the grid size does as $m$ increases
for most of p.c.f. fractals. It means that the propagation speed of
$u_{m}$ increases as $m$ increases. Although the result will not
be used in the later proof, it gives the heuristic reason why the
infinite speed holds. 

In Theorem \ref{thm:infinite_speed}, we prove the infinite propagation
speed. If the initial position is zero and initial velocity is positive,
then $u$ attains positive values for all points $x\in K$ within
arbitrary small time period. The proof uses a heat kernel lower bound
and a relation of heat equation and wave equation. In Theorem \ref{thm:avg_wave},
we prove a off-diagonal upper bound for the solution of wave equation
using a complex time heat kernel upper bound. This upper bound is
also sub-Gaussian.

\section{Preliminaries}

At first, we define briefly the notations and concepts introduced
by Jun Kigami\cite{kigami1993harmonic}. An iteration function system
(IFS) is a finite set of contraction mappings $\{F_{i}\}_{i=1}^{N}$
on a complete metric space. An IFS fractal $K$ is the unique compact
set such that $K=\bigcup_{k=1}^{N}F_{k}K$. A connected IFS fractal
is called post critical finite (p.c.f.) if there is a finite set $V_{0}$
such that $F_{j}K\cap F_{k}K\subseteq F_{j}V_{0}\cap F_{k}V_{0}$
for $j\neq k$. For a word $\omega=i_{1}i_{2}\dots i_{m}$, we define
$F_{\omega}=F_{i_{1}}\circ F_{i_{2}}\circ\cdots\circ F_{i_{m}}$.
For example, the interval $[0,1]$ is the unique IFS fractal generated
by mappings $\{F_{1}(x)=\frac{1}{2}x,F_{2}(x)=\frac{1}{2}x+\frac{1}{2}\}$
and the corresponding $V_{0}$ is $\{0,1\}$. 

Now we define a sequence of increasing finite graphs $\Gamma_{i}$
to approximate $K$. Let $\Gamma_{0}$ be the complete graph of the
finite set $V_{0}$. For $i>0$, we define $\Gamma_{i}=(V_{i},\ E_{i})$
where $V_{i}=\cup_{k=1}^{N}F_{k}V_{i-1}$ and 
\[
E_{i}=\{(F_{j}x,F_{j}y)\in V_{i}\times V_{i}|\ (x,y)\in E_{i-1}\}.
\]
We define $V_{*}=\cup_{k=1}^{\infty}V_{k}$. Using $[0,1]$ as an
example, the corresponding $V_{n}$ is $\{\frac{i}{2^{n}}|i\in\{0,\cdots,2^{n}\}\}$,
$\Gamma_{n}$ is the simple path with $2^{n}+1$ vertices and $V_{*}$
is the set of dyadic numbers $\{\frac{a}{2^{n}}|n\in\mathbb{N}\ \text{and}\ 0\leq a\leq2^{n}\}$.

For any finite set $V$, a non-negative symmetric bi-linear form $\mathcal{E}$
on $V$ is called a Dirichlet form if $\mathcal{E}(u,u)=0$ for all
constant functions $u$ on $V$ and $\mathcal{E}(u,u)\geq\mathcal{E}([u],[u])$
for any function $u$ on $V$ where $[u]=\min\{\max\{u,0\},1\}$.
For $V'\subset V$, we can induce a Dirichlet form $\mathcal{E}_{V'}$
from $\mathcal{E}_{V}$ by 
\[
\mathcal{E}_{V'}(u,u)=\inf_{v|_{V'}=u}\mathcal{E}_{V}(v,v).
\]
For any IFS fractal $K$, a sequence of Dirichlet form $\{\mathcal{E}_{m}\}$
on $\{V_{m}\}$ is called compatible if $\mathcal{E}_{m}$ is induced
from $\mathcal{E}_{m+1}$ for all $m$. If this sequence satisfies
the equation 
\[
\mathcal{E}_{m+1}(u,v)=\sum_{i=1}^{N}r_{i}^{-1}\mathcal{E}_{m}(u\circ F_{i},v\circ F_{i})
\]
 for some number $r_{i}>0$, we call it a self-similar sequence and
it is said to be regular if $r_{i}<1$. 

If $\mathcal{E}_{m}$ are compatible, $\mathcal{E}_{m}(u|_{V_{m}},u|_{V_{m}})$
is increasing. For any function $u$ on $K$, we define energy $\mathcal{E}(u,u)$
as $\lim_{m\rightarrow\infty}\mathcal{E}_{m}(u|_{V_{m}},u|_{V_{m}})$
and $\dome=\{u\in C(K):\ \mathcal{E}(u,u)<\infty\}$. It is known
that $\dome/\text{constants}$ is a Hilbert space. For any function
$u$ on $V_{m}$, the harmonic extension of $u$ is the unique continuous
function $\tilde{u}$ on $K$ minimizing the energy $\mathcal{E}(\tilde{u},\tilde{u})$.
We define $\psi_{p}^{m}(x)$ to be the harmonic extension of the delta
function $\delta_{xp}$ on $V_{m}$.

For example, we can define a regular self-similar sequence on $[0,1]$
by 
\[
\mathcal{E}_{m}(u,u)=\sum_{k=0}^{2^{m}-1}\left|\frac{u(\frac{k+1}{2^{m}})-u(\frac{k}{2^{m}})}{\frac{1}{2^{m}}}\right|^{2}\frac{1}{2^{m}}.
\]
The corresponding energy $\mathcal{E}(u,u)=\int_{0}^{1}|u'(x)|^{2}dx$
for $u\in C^{1}([0,1])$. The corresponding harmonic extension is
linear interpolation on $V_{m}$; $\psi_{p}^{m}(x)$ is a triangular
function at point $p$ with width $2^{-m+1}$ and $dom\mathcal{E}=H^{1}$.

We define the resistance metric on $K$ by 
\[
R(x,y)=\max\{\mathcal{E}(v,v)^{-1}:\ v(x)=1,\ v(y)=0\}.
\]
It is known that $K$ is compact under resistance metric, in particular,
\begin{equation}
|u(x)-u(y)|^{2}\leq C\mathcal{E}(u,u).\label{eq:morrey inequality}
\end{equation}
where $C=\sup_{x,y\in K}R(x,y)<\infty.$

Next, we define a self-similar probability measure $\mu$ on $K$
by

\[
\mu(A)=\sum_{i=1}^{N}\mu_{i}\mu(F_{i}^{-1}A)
\]
for some $\mu_{i}\in(0,1)$ such that $\sum\mu_{i}=1$. For $u\in\dome$,
the Laplacian of $u$ corresponding to the self-similar $\mu$ is
defined by the weak formulation: $\Delta_{\mu}u=f$ if $f\in L^{2}$
and 
\[
\mathcal{E}(u,v)=-\int fv\ d\mu
\]
for all $v\in\dome$ that vanish on the boundary $V_{0}$. If $\Delta_{\mu}u$
is continuous, we have a pointwise formula for $\Delta_{\mu}u$:
\[
\Delta_{\mu}u(x)=\lim_{m\rightarrow\infty}\mu_{m,x}^{-1}H_{m}u(x)
\]
where $\mu_{m,p}=\int\psi_{p}^{m}(x)\ dx$ and $H_{m}$ is the self-adjoint
matrix such that 
\[
\mathcal{E}_{m}(u,v)=-(u,H_{m}v).
\]

\section{Existence of solutions}

Let $B$ be a finite subset of $V_{*}$. For $u\in C(K)$ vanishing
on $B$, we define the Laplacian $\Delta_{\mu,B}u=f$ if $\mu_{m,x}^{-1}H_{m}u(x)$
converges uniformly to a continuous function $f$ on $K\backslash B$.
\cite[A.2]{kigami2001analysis}

The wave equation with boundary set $B$, initial position $f$ and
initial velocity $g$ is defined by

\begin{equation}
\begin{cases}
\begin{array}{ccll}
u_{tt}(x,t) & = & \Delta_{\mu,B}u(x,t)\quad & (x\in K\backslash B,\ t\in\mathbb{R})\\
u(x,0) & = & f(x) & (x\in K)\\
u_{t}(x,0) & = & g(x) & (x\in K)\\
u(x,t) & = & 0 & (x\in B,\ t\in\mathbb{R})
\end{array} & ,\end{cases}\label{eq:wave_eq}
\end{equation}
where the time derivative $u_{tt}$ is in the classical sense. For
convenience, we write $\Delta$ instead of $\Delta_{\mu,B}$. The
condition $B=\emptyset$ corresponds to Neumann boundary condition
and $B=V_{0}$ corresponds Dirichlet boundary condition. In this paper,
we use $C$ as a generic constant which depends only on the fractal.
Since most of the proofs in this and next sections need extra care
for the case $B=\emptyset$, we omit the proofs for that case in these
two sections. 

By \cite[A.2]{kigami2001analysis}, we have a set of orthogonal eigenvectors
$\left\{ \varphi_{n}\right\} _{n\geq1}$ of $-\Delta$ with corresponding
increasing eigenvalues $\left\{ \lambda_{n}\right\} _{n\geq1}$ such
that $||\varphi_{n}||_{2}=1$ and $\left\{ \varphi_{n}\right\} _{n\geq1}$
spans $\doml$. By the assumption $B\neq\emptyset$, we have $\lambda>0$.
\begin{lem}
\label{lem: sobolev} $\dome=\{\sum a_{n}\varphi_{n}:\sum a_{n}^{2}\lambda_{n}<\infty\}$
and $||u||_{\infty}\leq C\mathcal{E}(u)^{1/2}$.\end{lem}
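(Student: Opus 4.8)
The plan is to prove the two assertions in the order: \emph{first} the bound $\|u\|_{\infty}\le C\,\mathcal{E}(u)^{1/2}$, then the series characterization of $\dome$. The reason for this order is that the $L^{\infty}$ estimate is exactly what turns the energy seminorm into a genuine norm on the space of energy-finite functions vanishing on $B$, and that fact is needed to control the limits in the characterization. (Here I read the ``$\dome$'' in the statement as that subspace, so that both sides of the claimed identity indeed vanish on $B$.) For the $L^{\infty}$ bound, fix a point $p\in B$, possible since $B\neq\emptyset$; every such $u$ has $u(p)=0$, so (\ref{eq:morrey inequality}) gives $|u(x)|^{2}=|u(x)-u(p)|^{2}\le C\,\mathcal{E}(u,u)$ for all $x\in K$, and taking the supremum over $x$ finishes it.

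For the characterization, the key point is that the eigenfunctions diagonalize $\mathcal{E}$. Since $\varphi_{n}\in\doml$, the weak formula gives $\mathcal{E}(\varphi_{n},v)=-\int(\Delta\varphi_{n})v\,d\mu=\lambda_{n}\int\varphi_{n}v\,d\mu$ for every energy-finite $v$ vanishing on $B$; by symmetry of $\mathcal{E}$, $\mathcal{E}(v,\varphi_{n})=\lambda_{n}\int v\varphi_{n}\,d\mu$, and in particular $\mathcal{E}(\varphi_{m},\varphi_{n})=\lambda_{n}\delta_{mn}$. For the inclusion ``$\supseteq$'', suppose $\sum a_{n}^{2}\lambda_{n}<\infty$. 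Since $\lambda_{n}\ge\lambda_{1}>0$ we get $\sum a_{n}^{2}<\infty$, so the partial sums $S_{N}=\sum_{n\le N}a_{n}\varphi_{n}$ converge in $L^{2}$ to some $u$; moreover $\mathcal{E}(S_{N}-S_{M})=\sum_{M<n\le N}a_{n}^{2}\lambda_{n}\to0$, and as $S_{N}-S_{M}$ vanishes on $B$ the $L^{\infty}$ bound shows $(S_{N})$ is uniformly Cauchy, so it also converges uniformly, necessarily to $u$. Since $\mathcal{E}=\sup_{m}\mathcal{E}_{m}$ is lower semicontinuous under pointwise convergence, $\mathcal{E}(u-S_{N})\le\liminf_{M}\mathcal{E}(S_{M}-S_{N})=\sum_{n>N}a_{n}^{2}\lambda_{n}\to0$, whence $u\in\dome$ and $\mathcal{E}(u)=\sum a_{n}^{2}\lambda_{n}$. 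For ``$\subseteq$'', take $u\in\dome$ vanishing on $B$, set $a_{n}=\int u\varphi_{n}\,d\mu$, and apply the diagonalization identity with $v=P_{N}u:=\sum_{n\le N}a_{n}\varphi_{n}$: this gives $\mathcal{E}(u,P_{N}u)=\sum_{n\le N}\lambda_{n}a_{n}^{2}=\mathcal{E}(P_{N}u)$, so the Cauchy--Schwarz inequality for $\mathcal{E}$ yields $\mathcal{E}(P_{N}u)\le\mathcal{E}(u)$, i.e.\ $\sum_{n\le N}a_{n}^{2}\lambda_{n}\le\mathcal{E}(u)$; letting $N\to\infty$ gives $\sum a_{n}^{2}\lambda_{n}\le\mathcal{E}(u)<\infty$. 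Finally $u=\sum a_{n}\varphi_{n}$ since $\{\varphi_{n}\}$ is an orthonormal basis of $L^{2}$ (it spans $\doml$, which is dense in $L^{2}$).

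The routine calculations---the two Cauchy--Schwarz steps and the telescoping of $\mathcal{E}(S_{N}-S_{M})$---are immediate; the one thing needing care is the bookkeeping of the three modes of convergence in play ($L^{2}$, uniform, and energy-seminorm) together with the fact that $\mathcal{E}^{1/2}$ is only a seminorm on all of $\dome$ but a genuine norm, controlling $\|\cdot\|_{\infty}$, on the subspace vanishing on $B$. This is the main obstacle, and it is precisely why the $L^{\infty}$ estimate is proved and used first: with it in hand, the completeness of that subspace (a closed subspace of the Hilbert space $\dome/\mathrm{constants}$) and the lower semicontinuity of $\mathcal{E}$ make the limiting arguments routine.
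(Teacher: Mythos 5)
Your proof is correct and follows essentially the same route as the paper: both use the resistance estimate \eqref{eq:morrey inequality} to obtain the sup-norm bound and the uniform convergence of the partial sums of the eigenfunction expansion, and both rest on the diagonalization $\mathcal{E}(\varphi_{m},\varphi_{n})=\lambda_{n}\delta_{mn}$. The only differences are cosmetic: the paper gets $\|u\|_{\infty}\leq C\mathcal{E}(u)^{1/2}$ by combining the oscillation bound with $\|u\|_{2}\leq\lambda_{1}^{-1/2}\mathcal{E}(u)^{1/2}$ rather than with $u|_{B}=0$, identifies the limit energy via completeness of $\dome/\text{constants}$ where you use lower semicontinuity of $\mathcal{E}$, and dispatches the reverse inclusion in one line where you spell out the Bessel-type argument.
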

\begin{proof}
Let $u=\sum a_{n}\varphi_{n}$ such that $M\triangleq\sum a_{n}^{2}\lambda_{n}<\infty$.
Let $u_{m}$ be the partial sum of $u$. Using $\mathcal{E}(u_{m})=\sum_{n=1}^{m}a_{n}^{2}\lambda_{n}<M$
and \eqref{eq:morrey inequality} , we have 
\[
|u_{m}(x)-u_{m}(y)|<C\ M^{1/2}.
\]
Combining with $\int u_{m}^{2}<M/\lambda_{1}$, we get 
\[
||u_{m}||_{\infty}<(C+\lambda_{1}^{-1/2})\ M^{1/2}.
\]
Hence, $u_{m}$ is uniform bounded. Again, by \eqref{eq:morrey inequality},
$u_{m}$ is equicontinuous. So, by the Arzelà\textendash{}Ascoli theorem,
$u\in C(K)$. Since $\dome/\text{constants}$ is a Hilbert space,
we get $\mathcal{E}(u)=\lim\mathcal{E}(u_{m})=M<\infty$. Hence $u\in dom\mathcal{E}$.

The converse follows from $\mathcal{E}(\sum a_{n}\varphi_{n})=\sum a_{n}^{2}\lambda_{n}$.
\end{proof}
For $\doml$, we do not have a similar description using the original
definition. So, we extend the domain of $\Delta$ to $\{\sum a_{n}\varphi_{n}:\ \sum a_{n}^{2}\lambda_{n}^{2}<\infty\}$
by the identity $\Delta(\sum a_{n}\varphi_{n})=-\sum a_{n}\lambda_{n}\varphi_{n}$,
which converges in $L^{2}$. Since $\sum a_{n}\varphi_{n}$ converges
in $L^{\infty}$ and $\varphi_{n}|_{B}=0$, the boundary condition
is satisfied for $u\in\doml$.

Let the initial position be $f=\sum\alpha_{n}\varphi_{n}$ and the
initial velocity be $g=\sum\beta_{n}\varphi_{n}$. We define the formal
solution by

\begin{equation}
u=\sum_{n}\alpha_{n}\cos(\sqrt{\lambda_{n}}t)\varphi_{n}+\sum_{n}\beta_{n}\frac{\sin(\sqrt{\lambda_{n}}t)}{\sqrt{\lambda_{n}}}\varphi_{n}.\label{eq:weaksol}
\end{equation}

It is standard to prove the formal solution is a weak solution under
some condition on $f$ and $g$. In \cite{hu2002nonlinear}, Hu discussed
wave solutions for the Fréchet derivatives. However, in order to complete
the error analysis using the finite difference method, we need to
prove that the classical solution exists.
\begin{thm}
\label{thm: existence}If $f\in\doml$ with $\Delta f\in\dome$ and
$g\in\doml$, then the solution $u$ of the wave equation exists.\end{thm}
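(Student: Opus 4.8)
The plan is to prove that the series \eqref{eq:weaksol}, together with its first two formal time derivatives, converges uniformly in $t$ on compact intervals in the energy space $\dome$, and then to pass to pointwise statements by means of the embedding $\|v\|_{\infty}\le C\,\mathcal{E}(v)^{1/2}$ of Lemma \ref{lem: sobolev}. Write $f=\sum\alpha_{n}\varphi_{n}$, $g=\sum\beta_{n}\varphi_{n}$ and set
\[
c_{n}(t)=\alpha_{n}\cos(\sqrt{\lambda_{n}}t)+\beta_{n}\,\frac{\sin(\sqrt{\lambda_{n}}t)}{\sqrt{\lambda_{n}}},
\]
so that $c_{n}''(t)=-\lambda_{n}c_{n}(t)$ and, formally, $u(\cdot,t)=\sum c_{n}(t)\varphi_{n}$, $u_{t}(\cdot,t)=\sum c_{n}'(t)\varphi_{n}$, $u_{tt}(\cdot,t)=\sum c_{n}''(t)\varphi_{n}=-\sum c_{n}(t)\lambda_{n}\varphi_{n}$.

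First I would translate the hypotheses into coefficient decay: $g\in\doml$ gives $\sum\beta_{n}^{2}\lambda_{n}^{2}<\infty$, while $\Delta f=-\sum\alpha_{n}\lambda_{n}\varphi_{n}\in\dome$ together with Lemma \ref{lem: sobolev} gives $\sum\alpha_{n}^{2}\lambda_{n}^{3}<\infty$. Using $c_{n}(t)^{2}\le 2\alpha_{n}^{2}+2\beta_{n}^{2}\lambda_{n}^{-1}$ and $c_{n}'(t)^{2}\le 2\alpha_{n}^{2}\lambda_{n}+2\beta_{n}^{2}$ (both valid for all $t$), and the orthogonality relation $\mathcal{E}(\varphi_{n},\varphi_{m})=\lambda_{n}\delta_{nm}$, the energies of the tails of all three series $\sum c_{n}(t)\varphi_{n}$, $\sum c_{n}'(t)\varphi_{n}$, $\sum c_{n}(t)\lambda_{n}\varphi_{n}$ are bounded by $C\sum_{n>N}(\alpha_{n}^{2}\lambda_{n}^{3}+\beta_{n}^{2}\lambda_{n}^{2})$, which tends to $0$ as $N\to\infty$, uniformly in $t\in\mathbb{R}$. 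By Lemma \ref{lem: sobolev} the three series then converge uniformly on $K\times[-T,T]$ for every $T>0$; call their sums $u(\cdot,t)$, $v(\cdot,t)$ and $-w(\cdot,t)$, which lie in $\dome\subset C(K)$ and vanish on $B$ (being uniform limits of functions vanishing on $B$).

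Next I would handle the time regularity. For fixed $x\in K\backslash B$, the partial sums $t\mapsto\sum_{n\le N}c_{n}(t)\varphi_{n}(x)$ are smooth, with first and second derivatives $\sum_{n\le N}c_{n}'(t)\varphi_{n}(x)$ and $\sum_{n\le N}c_{n}''(t)\varphi_{n}(x)$; by the previous step these converge uniformly on compact $t$-intervals, so the classical term-by-term differentiation theorem, applied twice, shows that $t\mapsto u(x,t)$ is $C^{2}$ with $u_{t}(x,t)=v(x,t)$ and $u_{tt}(x,t)=w(x,t)$. To identify $w(\cdot,t)$ with $\Delta u(\cdot,t)$ I would appeal to the weak (Gauss--Green) characterization of the Laplacian: since $u^{(N)}(\cdot,t)=\sum_{n\le N}c_{n}(t)\varphi_{n}\to u(\cdot,t)$ in $\dome$ and $\mathcal{E}(\varphi_{n},\eta)=\lambda_{n}\int\varphi_{n}\eta\,d\mu$ for every $\eta\in\dome$ vanishing on $B$, one gets $\mathcal{E}(u(\cdot,t),\eta)=-\int w(\cdot,t)\eta\,d\mu$, i.e.\ $u(\cdot,t)\in\doml$ with $\Delta u(\cdot,t)=w(\cdot,t)$ (equivalently, $w(\cdot,t)=-\sum c_{n}(t)\lambda_{n}\varphi_{n}$ is the continuous value of the extended Laplacian of $u(\cdot,t)$, so the pointwise equation $u_{tt}=\Delta u$ makes sense and holds). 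Finally $u(x,0)=\sum\alpha_{n}\varphi_{n}(x)=f(x)$, $u_{t}(x,0)=\sum\beta_{n}\varphi_{n}(x)=g(x)$, and $u(\cdot,t)|_{B}=0$, so $u$ solves \eqref{eq:wave_eq}.

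The main obstacle is the $u_{tt}$ series. A direct sup-norm bound $\|c_{n}(t)\lambda_{n}\varphi_{n}\|_{\infty}\le C|c_{n}(t)|\lambda_{n}^{3/2}$ is \emph{not} summable under the stated hypotheses (it would essentially require $\sum\alpha_{n}^{2}\lambda_{n}^{3+\varepsilon}<\infty$), so one really must route the argument through the energy norm, where the bound $c_{n}(t)^{2}\lambda_{n}^{3}\le 2(\alpha_{n}^{2}\lambda_{n}^{3}+\beta_{n}^{2}\lambda_{n}^{2})$ makes the tails summable; the embedding $\dome\hookrightarrow C(K)$ of Lemma \ref{lem: sobolev} is exactly what converts this energy control back into the uniform control needed both for the classical differentiation in $t$ and for the pointwise meaning of $\Delta u(\cdot,t)$. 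A secondary point requiring care is the identification of the uniform limit $w(\cdot,t)$ with $\Delta_{\mu,B}u(\cdot,t)$: this should go through the weak formulation (or Kigami's characterization of $\doml$), not through a naive ``$\Delta_{\mu,B}$ is closed in the sup-norm'' claim, which fails because $H_{m}$ applied to a function of small sup-norm need not be small.
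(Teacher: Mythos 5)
Your proposal is correct and follows essentially the same route as the paper: both arguments reduce the problem to the coefficient decay $\sum\alpha_{n}^{2}\lambda_{n}^{3}<\infty$, $\sum\beta_{n}^{2}\lambda_{n}^{2}<\infty$ and then use the embedding of Lemma \ref{lem: sobolev} to turn energy control of the twice-differentiated series into the uniform-in-$t$ control needed for term-by-term differentiation (the paper gets a summable majorant at each fixed $x_{0}$ via a sign-flip $\gamma_{n}=\alpha_{n}\lambda_{n}\operatorname{sgn}(\varphi_{n}(x_{0}))$ and the Weierstrass M-test, whereas you bound the tails in energy norm; these are interchangeable). Your extra care in identifying the uniform limit $w(\cdot,t)$ with $\Delta_{\mu,B}u(\cdot,t)$ via the weak formulation is a point the paper leaves implicit, and is a welcome addition.
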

\begin{proof}
Let $u$ be the weak solution defined by \eqref{eq:weaksol}. Formally,
we have 
\[
u_{tt}=-\sum_{n}\alpha_{n}\lambda_{n}\cos(\sqrt{\lambda_{n}}t)\varphi_{n}+\sum_{n}\beta_{n}\sqrt{\lambda_{n}}\sin(\sqrt{\lambda_{n}}t)\varphi_{n}.
\]
Fix $x_{0}\in K$ and $\gamma_{n}=\alpha_{n}\lambda_{n}\text{sgn}(\varphi_{n}(x_{0}))$.
Since $\Delta f\in\dome$, we have 
\[
\sum\gamma_{n}^{2}\lambda_{n}=\sum\alpha_{n}^{2}\lambda_{n}^{3}<\infty.
\]
By Lemma \ref{lem: sobolev}, we get 
\begin{eqnarray*}
\sum|\alpha_{n}\lambda_{n}\cos(\sqrt{\lambda_{n}}t)\varphi_{n}(x_{0})| & = & \sum|\alpha_{n}\lambda_{n}\varphi_{n}(x_{0})|\\
 & = & |\sum\gamma_{n}\varphi_{n}(x_{0})|\\
 & \leq & C\ \mathcal{E}(\Delta f)^{1/2}.
\end{eqnarray*}
According to the Weierstrass M-test, $\sum\alpha_{n}\lambda_{n}\cos(\sqrt{\lambda_{n}}t)\varphi_{n}(x_{0})$
converges uniformly for any $t$. Similarly for the term $|\beta_{n}\sqrt{\lambda_{n}}\sin(\sqrt{\lambda_{n}}t)\varphi_{n}(x_{0})|$.
This implies $u_{t}$ and $u_{tt}$ exist in the classical sense.\end{proof}
\begin{rem*}
In \cite{hu2002nonlinear}, Hu used the eigenvalue estimate $\lambda_{n}=O(n^{\alpha})$
to estimate the term $\sum|\alpha_{n}\lambda_{n}\cos(\sqrt{\lambda_{n}}t)\varphi_{n}(x_{0})|$.
That argument requires slightly stronger regularity condition. If
our argument is used to replace all eigenvalue estimates in that paper,
we could arrive the following result:

Let $f$ be a real-valued function on $\mathbb{R}$ satisfying $F(r)\leq C(1+|r|^{2})$
where $F(r)=\int_{0}^{r}f(s)ds$. If $g_{1}\in\doml$ with $\Delta g_{1}\in\dome$,
and $g_{2}\in\doml$, then the nonlinear wave equation with Dirichlet
boundary condition
\[
\begin{cases}
\begin{array}{ccll}
u_{tt}(x,t) & = & \Delta u(x,t)+f(u)\quad & (x\in K\backslash V_{0},\ t\in\mathbb{R})\\
u(x,0) & = & g_{1}(x) & (x\in K)\\
u_{t}(x,0) & = & g_{2}(x) & (x\in K)\\
u(x,t) & = & 0 & (x\in V_{0},\ t\in\mathbb{R})
\end{array}\end{cases}
\]
admits a weak solution, where the second derivative of $u$ is the
Fréchet derivative of $u_{t}$ in $L^{2}$.
\end{rem*}

\section{Finite Difference Method}

The wave equation on $\Gamma_{m}$ is defined by 
\[
u_{m}(x,t+1)=2u_{m}(x,t)-u_{m}(x,t-1)+h_{m}^{2}\mu_{m,x}^{-1}H_{m}u_{m}(x,t)
\]
 where $h_{m}$ is the time span. In this section, we find the difference
between solutions of the wave equation on $K$ and $\Gamma_{m}$. 

First of all, we prove that the wave equation on the approximate graph
is stable.
\begin{lem}
\label{lem:wave bound}Let $V$ be a finite dimension inner product
space. Let $H$ be a positive self-adjoint operator on $V$ with eigenvalues
$\leq3$. Let $\mathcal{E}_{H}(u)=(u,Hu)$, $h$ be a function on
$V\times\mathbb{N}$ and $g$ be a function on $V$. Let $u$ be the
solution of the wave equation

\[
\begin{cases}
\begin{array}{rcl}
u(t+1)-2u(t)+u(t-1) & = & -Hu(t)+h(t)\quad(t\geq1)\\
u(0) & = & 0\\
u(1) & = & g
\end{array} & .\end{cases}
\]
Then we have $\mathcal{E}_{H}(u(t))^{1/2}\leq2(||g||+\sum_{k=1}^{t}||h(k)||)$.\end{lem}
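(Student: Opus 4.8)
The plan is to diagonalize $H$ and decouple the vector recurrence into independent scalar recurrences, one per eigenvalue, each of which can be solved in closed form by a discrete Duhamel formula. Since $H$ is self-adjoint on the finite-dimensional space $V$, I would fix an orthonormal eigenbasis $e_{1},\dots,e_{d}$ with eigenvalues $\mu_{1},\dots,\mu_{d}\in[0,3]$ and expand $u(t)=\sum_{j}a_{j}(t)e_{j}$, $g=\sum_{j}g_{j}e_{j}$, $h(t)=\sum_{j}h_{j}(t)e_{j}$. Each coordinate then solves $a_{j}(t+1)-2a_{j}(t)+a_{j}(t-1)=-\mu_{j}a_{j}(t)+h_{j}(t)$ for $t\geq1$, with $a_{j}(0)=0$ and $a_{j}(1)=g_{j}$, while $\mathcal{E}_{H}(u(t))=\sum_{j}\mu_{j}a_{j}(t)^{2}$, so the zero eigenvalues contribute nothing to the energy and may be disregarded.

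For $\mu_{j}\in(0,3]$ I would write $\mu_{j}=2(1-\cos\theta_{j})$ with $\theta_{j}\in(0,2\pi/3]$; this is the only use of the hypothesis $\mu_{j}\leq3$, since it is exactly what forces $\cos\theta_{j}\geq-1/2$. The homogeneous solutions of the scalar recurrence are $\cos(\theta_{j}n)$ and $\sin(\theta_{j}n)$, and the causal fundamental solution $\Psi_{j}(n)=\sin(\theta_{j}n)/\sin\theta_{j}$ for $n\geq1$, $\Psi_{j}(n)=0$ for $n\leq0$, satisfies $\Psi_{j}(n+1)-2\Psi_{j}(n)+\Psi_{j}(n-1)+\mu_{j}\Psi_{j}(n)=\delta_{n,0}$; this is a one-line check at $n=0,1$ using $2\cos\theta_{j}=2-\mu_{j}$. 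Superposition then yields the explicit solution
\[
a_{j}(t)=g_{j}\Psi_{j}(t)+\sum_{k=1}^{t-1}h_{j}(k)\,\Psi_{j}(t-k),
\]
which one checks directly against the recurrence and both initial conditions (for $t=1$ the sum is empty).

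The crux is a single pointwise bound: $\sqrt{\mu_{j}}\,|\Psi_{j}(n)|\leq\sqrt{\mu_{j}}/\sin\theta_{j}$, and the half-angle identities $\mu_{j}=4\sin^{2}(\theta_{j}/2)$ and $\sin\theta_{j}=2\sin(\theta_{j}/2)\cos(\theta_{j}/2)$ reduce the right-hand side to $1/\cos(\theta_{j}/2)\leq2$, the last step holding because $\theta_{j}/2\in(0,\pi/3]$. Feeding this into the Duhamel formula gives $\sqrt{\mu_{j}}\,|a_{j}(t)|\leq2|g_{j}|+2\sum_{k=1}^{t-1}|h_{j}(k)|$, and taking the $\ell^{2}$-norm over $j$ while pulling the sum over $k$ outside the norm by Minkowski's inequality yields
\[
\mathcal{E}_{H}(u(t))^{1/2}=\Bigl(\sum_{j}\mu_{j}a_{j}(t)^{2}\Bigr)^{1/2}\leq2\|g\|+2\sum_{k=1}^{t-1}\|h(k)\|\leq2\Bigl(\|g\|+\sum_{k=1}^{t}\|h(k)\|\Bigr).
\]

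I expect the main obstacle to be conceptual rather than computational: one must recognize that ``eigenvalues $\leq3$'' is precisely the discrete (CFL-type) stability condition that keeps $\theta_{j}\in(0,2\pi/3]$, equivalently $1/\cos(\theta_{j}/2)\leq2$, which is where the constant $2$ in the statement originates. The remaining care is in setting up the discrete Duhamel formula --- verifying that $\Psi_{j}$ really is the causal fundamental solution and that the summation range is correct for small $t$ --- and this is routine. An energy-method proof is also available, via the discrete energy $E(t)=\|u(t+1)-u(t)\|^{2}+(Hu(t+1),u(t))$, for which $E(t)-E(t-1)=(h(t),u(t+1)-u(t-1))$ and $E(t)\geq\frac{1}{4}\|u(t+1)-u(t)\|^{2}\geq0$ by the eigenvalue bound, followed by a discrete Gr\"onwall estimate; but the constants come out less sharply that route, so I would present the spectral argument.
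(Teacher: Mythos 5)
Your proposal is correct and follows essentially the same route as the paper: diagonalize $H$, solve the homogeneous recurrence explicitly as $\sin(\theta_j t)/\sin\theta_j$ with $\mu_j=2(1-\cos\theta_j)$, bound the energy via $\mu_j/\sin^2\theta_j=1/(1-\mu_j/4)\leq4$ (your half-angle computation $1/\cos(\theta_j/2)\leq2$ is the same estimate), and handle the forcing term by discrete Duhamel superposition plus the triangle/Minkowski inequality. Your write-up is in fact somewhat more careful than the paper's about the causal fundamental solution and the summation range.
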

\begin{proof}
Let $\{v_{n}\}$ be the orthonormal eigenvectors of $H$ with corresponding
eigenvalues $\lambda_{n}$. 

For the case $h\equiv0$, let $g(x)=\sum a_{n}v_{n}$. Then the solution
is 
\[
u(x,t)=\sum\alpha_{n}\sin(\theta_{n}t)v_{n}(x)
\]
 where $\theta_{n}=\cos^{-1}(1-\frac{\lambda_{n}}{2})$ and $\alpha_{n}=a_{n}/\sin(\theta_{n})$.
So, the energy at time $t$ is 
\begin{eqnarray*}
\mathcal{E}_{H}(u(\cdot,t)) & = & \sum\alpha_{n}^{2}\lambda_{n}\sin^{2}(\theta_{n}t)\\
 & \leq & \sum\frac{a_{n}^{2}}{\sin(\theta_{n})^{2}}\lambda_{n}\\
 & = & \sum\frac{a_{n}^{2}}{1-\frac{\lambda_{n}}{4}}
\end{eqnarray*}
By the assumption $\lambda_{n}\leq3$, so we have $\mathcal{E}_{H}(u(\cdot,t))\leq4\sum a_{n}^{2}=4||g||^{2}.$

For the general case, let $W_{g}(x,t)$ be the solution of this homogeneous
equation at time $t$ with initial velocity $g$. The result follows
from the formula for the general solution:

\[
u(x,t)=W_{g}(x,t)+\sum_{k=1}^{t}W_{h(\cdot,k)}(x,t-k).
\]

\end{proof}
Next, we estimate the difference between a finite energy function
and its step function approximation. Recall that $\psi_{p}^{m}(x)$
is the harmonic extension of the function $\delta_{xp}$ on $V_{m}$. 
\begin{lem}
\label{lem:Poincare inequality}For $f\in\dome$, we have 
\[
\sum_{x\in V_{m}}\int_{K_{m,x}}|f(x)-f(y)|^{2}dy\leq C\mu_{\max}^{m}r_{\max}^{m}\mathcal{E}(f),
\]
where $\mu_{\max}=\max\mu_{i}$, $r_{\max}=\max r_{i}$ and $K_{m,x}={\rm supp}\psi_{x}^{m}$.\end{lem}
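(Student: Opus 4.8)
The plan is to exploit self-similarity to reduce the global sum over all $m$-cells to a single estimate on one cell, and then control the energy on a cell by the full energy. For a word $\omega$ of length $m$, write $K_\omega = F_\omega K$; the cells $\{K_\omega\}_{|\omega|=m}$ tile $K$. Since $\psi_x^m$ is supported on the union of those cells containing $x$, and there are at most a bounded number $N$ of such cells meeting at any vertex, it suffices to bound, for each fixed word $\omega$,
\[
\int_{K_\omega}\bigl|f(F_\omega p)-f(y)\bigr|^2\,dy
\]
for a vertex $p\in V_0$ with $F_\omega p = x$, and then sum over $\omega$. The change of variables $y = F_\omega z$ rewrites this as $\mu_\omega \int_K |g(p)-g(z)|^2\,dz$ where $g = f\circ F_\omega$ and $\mu_\omega = \prod_{j}\mu_{i_j}\le \mu_{\max}^m$.

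Next I would bound $\int_K |g(p)-g(z)|^2\,dz$ by $C\,\mathcal{E}(g)$ using the Morrey-type inequality \eqref{eq:morrey inequality}: since $|g(p)-g(z)|^2 \le C\,\mathcal{E}(g)$ pointwise and $\mu$ is a probability measure, integrating gives $\int_K |g(p)-g(z)|^2\,dz \le C\,\mathcal{E}(g)$. Now the self-similarity of the energy form enters: $\mathcal{E}(f\circ F_\omega) = r_\omega\,\mathcal{E}_\omega(f)$ where $r_\omega = \prod_j r_{i_j}\le r_{\max}^m$ and $\mathcal{E}_\omega$ is the contribution to the energy of $f$ from the cell $K_\omega$ (this follows by iterating the self-similar scaling relation $\mathcal{E}_{m+1}(u,v)=\sum_i r_i^{-1}\mathcal{E}_m(u\circ F_i, v\circ F_i)$ and passing to the limit). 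Therefore each cell contributes at most $C\,\mu_{\max}^m r_{\max}^m\,\mathcal{E}_\omega(f)$.

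Finally I would sum over all words $\omega$ of length $m$ (with the overcounting from vertices shared by several cells absorbed into $C$, using that each vertex lies in at most $N$ cells): $\sum_{|\omega|=m}\mathcal{E}_\omega(f) = \mathcal{E}(f)$ by the additivity of the energy over cells, giving the claimed bound $C\,\mu_{\max}^m r_{\max}^m\,\mathcal{E}(f)$. The step I expect to require the most care is the bookkeeping that passes from the sum over vertices $x\in V_m$ (with $\psi_x^m$ possibly supported on several cells) to a clean sum over cells $\omega$: one must check that replacing $f(x)$ by $f(F_\omega p)$ is consistent across the cells sharing $x$ and that the finite multiplicity of the cover only costs a fractal-dependent constant. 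The use of \eqref{eq:morrey inequality} after rescaling is the other point to state precisely, since it is what makes the per-cell constant $C$ independent of $m$ and $\omega$.
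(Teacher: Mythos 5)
Your proposal is correct and follows essentially the same route as the paper: apply the resistance/Morrey estimate \eqref{eq:morrey inequality} on a single cell, rescale the measure and the energy via self-similarity to pick up the factors $\mu_{\max}^m$ and $r_{\max}^m$, and sum over cells using the bounded overlap of the supports $K_{m,x}$. The only cosmetic difference is that you decompose $K_{m,x}$ into its constituent $m$-cells indexed by words, whereas the paper uses that $K_{m,x}$ sits inside an $(m-1)$-cell; both yield the same bound up to the generic constant.
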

\begin{proof}
Using $|f(y)-f(x)|^{2}\leq C\mathcal{E}(f)$, we have
\[
\int_{K}|f(y)-f(x)|^{2}dy\leq C\ \mathcal{E}(f).
\]
Applying the contraction mappings $F_{\omega}^{-1}$ on both sides,
we get
\begin{eqnarray*}
\int_{F_{\omega}K}|f\circ F_{\omega}^{-1}(y)-f\circ F_{\omega}^{-1}(x)|{}^{2}dy & = & \mu_{\omega}\int_{K}|f(y)-f(x)|^{2}dy\\
 & \leq & C\mu_{\omega}\mathcal{E}(f)\\
 & = & C\mu_{\omega}r_{\omega}\mathcal{E}(f\circ F_{\omega}^{-1}).
\end{eqnarray*}
Thus, for any finite energy function $f$ with support in $F_{\omega}K$,
we have
\[
\int_{K}|f(y)-f(x)|^{2}dy\leq C\ \mu_{\max}^{m}r_{\max}^{m}\mathcal{E}(f)
\]
 where $m=|\omega|$. For $x\in F_{\omega}V_{0}\subset V_{m}$, $K_{m,x}$
is contained in a $m-1$ cell. Thus, 
\[
\int_{K_{m,x}}|f(y)-f(x)|^{2}dy\leq C\ \mu_{\text{max}}^{m-1}r_{\text{max}}^{m-1}\mathcal{E}(f|_{K_{m,x}})
\]
Summing the inequality over $V_{m}$, we have 
\[
\sum_{x\in V_{m}}\int_{K_{m,x}}|f(x)-f(y)|^{2}dy\leq C\ \mu_{\text{max}}^{m-1}r_{\text{max}}^{m-1}\sum_{x\in V_{m}}\mathcal{E}(f|_{K_{m,x}}).
\]
Since $K_{m,x}$ covers $K$ at most $N$ times, $\sum_{x\in V_{m}}\mathcal{E}(f|_{K_{x}})\leq N\ \mathcal{E}(f)$.
\end{proof}
We define $(u,v)_{m}=\sum_{x\in V_{m}}u(x)v(x)\mu_{m,x}$. Under this
inner product, the operator $h_{m}^{2}\mu_{m,x}^{-1}H_{m}$ is self-adjoint.
\begin{lem}
\label{lem:two_norm_difference}For any $f\in\dome$, we have 
\begin{eqnarray*}
\left|||f||_{m}-||f||_{L_{2}}\right| & \leq & C\sqrt{\mu_{\text{max}}^{m}r_{\text{max}}^{m}\mathcal{E}(f)}.
\end{eqnarray*}
\end{lem}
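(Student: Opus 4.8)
The plan is to compare $\|f\|_m$ with $\|f\|_{L_2}$ by manufacturing an explicit piecewise-constant function $h$ on $K$ whose $L^2$-norm is \emph{exactly} $\|f\|_m$ and which is $L^2$-close to $f$, and then to apply the triangle inequality. To build $h$, first recall that the $m$-cells $\{F_\omega K:\ |\omega|=m\}$ cover $K$ with pairwise intersections of $\mu$-measure zero, and that on each cell $F_\omega K$ one has $\sum_{x\in F_\omega V_0}\psi_x^m\equiv 1$: indeed $\psi_x^m$ vanishes on any $m$-cell not having $x$ as a vertex, while the harmonic extension of the remaining sum is the harmonic extension of $1|_{V_m}$, i.e.\ the constant $1$. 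Hence I can partition each cell $F_\omega K$, up to a $\mu$-null set, into measurable pieces indexed by its vertices $x$, with the piece attached to $x$ having measure $\int_{F_\omega K}\psi_x^m\,d\mu$. Taking, for each $x\in V_m$, the union of the pieces over all $m$-cells containing $x$ yields a set $A_x\subseteq K_{m,x}=\operatorname{supp}\psi_x^m$ with $\mu(A_x)=\int\psi_x^m\,d\mu=\mu_{m,x}$, and the family $\{A_x\}_{x\in V_m}$ partitions $K$ up to a null set.

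Next I would set $h=\sum_{x\in V_m}f(x)\mathbf{1}_{A_x}$. Since the $A_x$ are disjoint with $\mu(A_x)=\mu_{m,x}$, this gives $\|h\|_{L_2}^2=\sum_{x\in V_m}f(x)^2\mu_{m,x}=\|f\|_m^2$, so $\|h\|_{L_2}=\|f\|_m$. On the other hand, for $y\in A_x$ we have $h(y)=f(x)$ and $A_x\subseteq K_{m,x}$, whence
\[
\|f-h\|_{L_2}^2=\sum_{x\in V_m}\int_{A_x}|f(y)-f(x)|^2\,d\mu(y)\ \le\ \sum_{x\in V_m}\int_{K_{m,x}}|f(y)-f(x)|^2\,d\mu(y)\ \le\ C\,\mu_{\max}^m r_{\max}^m\,\mathcal{E}(f),
\]
the last inequality being exactly Lemma \ref{lem:Poincare inequality}. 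The assertion then follows from $\bigl|\,\|f\|_m-\|f\|_{L_2}\,\bigr|=\bigl|\,\|h\|_{L_2}-\|f\|_{L_2}\,\bigr|\le\|h-f\|_{L_2}$ after absorbing the square root of $C$ into the generic constant.

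The only step needing a little care is the construction of the partition $\{A_x\}$: one uses that $\mu$ is non-atomic (it is the self-similar measure with weights $\mu_i\in(0,1)$) so that a measurable subset of any prescribed measure $\le\mu(F_\omega K)$ can be carved out of each cell, together with the partition-of-unity identity noted above. I do not expect a genuine obstacle here; everything else is the triangle inequality combined with the already-established Poincaré-type bound. (An alternative route, slightly messier, would avoid $h$ entirely: expand $\|f\|_m^2-\|f\|_{L_2}^2=\sum_x\int(f(x)^2-f(y)^2)\psi_x^m\,d\mu$ via the partition of unity, bound $|f(x)^2-f(y)^2|\le|f(x)-f(y)|\,(|f(x)|+|f(y)|)$, apply Cauchy--Schwarz together with Lemma \ref{lem:Poincare inequality}, and control $\sum_x f(x)^2\mu(K_{m,x})$ by $\|f\|_{L_2}^2$ plus a Poincaré error; but the $h$-argument is cleaner and gives the norm difference directly rather than the difference of squares.)
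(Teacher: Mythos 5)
Your argument is correct, but it takes a genuinely different route from the paper's. The paper works with the difference of squares: it writes $\left|\|f\|_{m}^{2}-\|f\|_{L_{2}}^{2}\right|=\left|\sum_{x\in V_{m}}\int_{K_{m,x}}(|f(x)|^{2}-|f(y)|^{2})\psi_{x}^{m}(y)\,dy\right|$ via the same partition-of-unity identity you invoke, factors $|f(x)^{2}-f(y)^{2}|\leq|f(x)-f(y)|\,|f(x)+f(y)|$, applies Cauchy--Schwarz, controls the first factor by Lemma \ref{lem:Poincare inequality} and the second by $2N(\|f\|_{m}+\|f\|_{L_{2}})^{2}$, and then divides out $\|f\|_{m}+\|f\|_{L_{2}}$ --- i.e.\ exactly the ``alternative route'' you sketch in your closing parenthesis. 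Your main argument instead realizes $\|f\|_{m}$ as the honest $L^{2}$-norm of a piecewise-constant function $h=\sum_{x}f(x)\mathbf{1}_{A_{x}}$ and concludes by the reverse triangle inequality, with Lemma \ref{lem:Poincare inequality} bounding $\|f-h\|_{L_{2}}$. This buys a cleaner conclusion (you get the norm difference directly, with no need to estimate $\sum_{x}\int_{K_{m,x}}|f(x)+f(y)|^{2}dy$ or to cancel a common factor), at the price of a measure-theoretic construction: you must carve each $m$-cell into pieces of prescribed measure $\int_{F_{\omega}K}\psi_{x}^{m}d\mu$, which requires $\mu$ to be non-atomic. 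That hypothesis does hold here (every point has at most finitely many addresses and $\mu(F_{\omega}K)\leq\mu_{\max}^{|\omega|}\to0$), and the partition-of-unity facts you use --- that $\sum_{x\in V_{m}}\psi_{x}^{m}\equiv1$ and that $\psi_{x}^{m}$ vanishes on $m$-cells not having $x$ as a vertex --- are standard consequences of harmonic extension being computed cell by cell. So the proof is sound; it is simply a different, and arguably more transparent, mechanism than the one in the paper.
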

\begin{proof}
By direct calculation, we have
\begin{eqnarray*}
 &  & \left|||f||_{m}^{2}-||f||_{L_{2}}^{2}\right|\\
 & = & \left|\sum_{x\in V_{m}}\int_{K_{m,x}}\left(|f(x)|^{2}\psi_{x}^{m}(y)-|f(y)|^{2}\psi_{x}^{m}(y)\right)dy\right|\\
 & \leq & \sum_{x\in V_{m}}\int_{K_{m,x}}|f(x)-f(y)||f(x)+f(y)|dy\\
 & \leq & \left(\sum_{x\in V_{m}}\int_{K_{m,x}}|f(x)-f(y)|^{2}dy\right)^{1/2}\left(\sum_{x\in V_{m}}\int_{K_{m,x}}|f(x)+f(y)|^{2}dy\right)^{1/2}.
\end{eqnarray*}
Then the result follows from Lemma \ref{lem:Poincare inequality}
and
\begin{eqnarray*}
\sum_{x\in V_{m}}\int_{K_{m,x}}|f(x)+f(y)|^{2}dy & \leq & 2N(||f||_{V_{m}}^{2}+||f||_{L_{2}}^{2})\\
 & \leq & 2N(||f||_{V_{m}}+||f||_{L_{2}})^{2}.
\end{eqnarray*}
\end{proof}
\begin{thm}
\label{thm:na error}Assume $f\in\doml$ with $\Delta f\in\dome$
and $g\in\doml$. Assume both $f$ and $g$ vanish on the boundary
$B$. Assume $B\subset V_{m}$ and eigenvalues of $-h^{2}\mu_{m,x}^{-1}H_{m}$
are $\leq3$. Let $u_{m}$ be the solution of the wave equation on
$\Gamma_{m}:$

\[
\begin{cases}
\begin{array}{ccll}
u_{m}(x,t+1) & = & 2u_{m}(x,t)-u_{m}(x,t-1)+h^{2}\mu_{m,x}^{-1}H_{m}u_{m}(x,t)\quad & (x\in V_{m}\backslash B)\\
u_{m}(x,t) & = & 0 & (x\in B)\\
u_{m}(x,0) & = & f(x) & (x\in V_{m})\\
u_{m}(x,1) & = & f(x)+hg(x)+\frac{h^{2}}{2}\mu_{m,x}^{-1}H_{m}u_{m}(x,0) & (x\in V_{m})
\end{array} & ,\end{cases}
\]
Then, we have
\[
|u_{m}(x,t)-u(x,ht)|\leq Ct(h^{2}+\sqrt{\mu^{m}r^{m}}h)\quad(x\in V_{m},\ t\in\mathbb{N})
\]
where $u$ is the solution of the wave equation on $K$.\end{thm}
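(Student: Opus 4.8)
The plan is to run a standard consistency-plus-stability argument, with the fractal geometry entering only through a rate estimate for how well the discrete Laplacian $\mu_{m,\cdot}^{-1}H_{m}$ approximates $\Delta$. Set $v(x,t)=u(x,ht)$ and $e_m(x,t)=u_m(x,t)-v(x,t)$ on $V_m$. Substituting $v$ into the scheme and defining the residual $R(x,t)=v(x,t+1)-2v(x,t)+v(x,t-1)-h^{2}\mu_{m,x}^{-1}H_{m}v(x,t)$, one checks (subtracting the scheme for $u_m$ from the identity defining $R$) that $e_m$ solves $e_m(t+1)-2e_m(t)+e_m(t-1)=-\bigl(-h^{2}\mu_{m,\cdot}^{-1}H_{m}\bigr)e_m(t)-R(t)$ for $t\ge1$ on $V_m\setminus B$, with $e_m(\cdot,0)=0$ (since $v(\cdot,0)=f=u_m(\cdot,0)$), $e_m(\cdot,1)=u_m(\cdot,1)-u(\cdot,h)$, and $e_m$ vanishing on $B$. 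This is precisely the setting of Lemma \ref{lem:wave bound} with $H=-h^{2}\mu_{m,\cdot}^{-1}H_{m}$ (positive self-adjoint for $(\cdot,\cdot)_m$, eigenvalues $\le3$ by hypothesis) and source $-R$; since $\mathcal{E}_{H}(w)=h^{2}\mathcal{E}_{m}(w)$, the lemma yields $h\,\mathcal{E}_{m}(e_m(t))^{1/2}\le2\bigl(\|e_m(1)\|_{m}+\sum_{k=1}^{t}\|R(k)\|_{m}\bigr)$.

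Next I split $R=R_{\mathrm{time}}+R_{\mathrm{space}}$ using $u_{tt}=\Delta u$, where $R_{\mathrm{time}}(x,t)=v(x,t+1)-2v(x,t)+v(x,t-1)-h^{2}u_{tt}(x,ht)$ and $R_{\mathrm{space}}(x,t)=h^{2}\bigl(\Delta u(x,ht)-\mu_{m,x}^{-1}H_{m}u(x,ht)\bigr)$, and I decompose $e_m(1)$ analogously by expanding $u(\cdot,h)=f+hg+\int_{0}^{h}(h-s)u_{tt}(\cdot,s)\,ds$. The core estimate is: for $\phi\in\doml$ with $\Delta\phi\in\dome$ vanishing on $B$, one has $\|\mu_{m,\cdot}^{-1}H_{m}\phi-\Delta\phi\|_{m}\le C\sqrt{\mu_{\max}^{m}r_{\max}^{m}\,\mathcal{E}(\Delta\phi)}$. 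To prove it, combine the weak formulation $\mathcal{E}(\phi,\psi_{x}^{m})=-\int(\Delta\phi)\psi_{x}^{m}\,d\mu$ (valid for $x\in V_m\setminus B$, since then $\psi_x^m$ vanishes on $B$) with the harmonic-extension identity $\mathcal{E}(\phi,\psi_{x}^{m})=\mathcal{E}_{m}(\phi|_{V_m},\delta_{x})=-(H_{m}\phi)(x)$ (the last step by self-adjointness of $H_m$); this shows $\mu_{m,x}^{-1}(H_{m}\phi)(x)$ equals the $\psi_{x}^{m}\,d\mu$-weighted average of $\Delta\phi$ over $K_{m,x}$, so $\mu_{m,x}^{-1}(H_{m}\phi)(x)-\Delta\phi(x)$ is the weighted average of $\Delta\phi(y)-\Delta\phi(x)$. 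Jensen's inequality, $\psi_x^m\le1$, summation over $V_m$, and Lemma \ref{lem:Poincare inequality} applied to $\Delta\phi$ then give the claim. Applying this with $\phi=u(\cdot,ht)$, whose Laplacian has $\mathcal{E}$-energy bounded uniformly in $t$ via the spectral formula \eqref{eq:weaksol} and Lemma \ref{lem: sobolev} under the hypotheses, gives $\|R_{\mathrm{space}}(k)\|_m\le Ch^{2}\sqrt{\mu_{\max}^{m}r_{\max}^{m}}$, and the case $\phi=f$ controls the spatial part of $e_m(1)$.

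For the time term, write $v(x,t+1)-2v(x,t)+v(x,t-1)=\int_{0}^{h}(h-s)\bigl[u_{tt}(x,ht+s)+u_{tt}(x,ht-s)\bigr]ds$ and $h^{2}u_{tt}(x,ht)=\int_{0}^{h}(h-s)\cdot2u_{tt}(x,ht)\,ds$, so $R_{\mathrm{time}}(x,t)$ is the integral against $(h-s)$ of a second difference of $u_{tt}$ in time; expanding that second difference through \eqref{eq:weaksol} and estimating with Lemma \ref{lem: sobolev} as in the proof of Theorem \ref{thm: existence} shows the regularity of the data makes $\|R_{\mathrm{time}}(k)\|_m\le Ch^{3}$, and a matching second-order expansion handles the time part of $e_m(1)$. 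Collecting everything, $\|e_m(1)\|_m+\sum_{k=1}^{t}\|R(k)\|_m\le Ct\bigl(h^{3}+h^{2}\sqrt{\mu_{\max}^{m}r_{\max}^{m}}\bigr)$ (using $\sqrt{\mu_{\max}^mr_{\max}^m}\le1$), so Lemma \ref{lem:wave bound} gives $\mathcal{E}_{m}(e_m(t))^{1/2}\le\frac{C}{h}\,t\bigl(h^{3}+h^{2}\sqrt{\mu_{\max}^{m}r_{\max}^{m}}\bigr)$. Finally, since $e_m(\cdot,t)$ vanishes on $B\ne\emptyset$, its harmonic extension $w$ satisfies $|w(x)-w(p)|^{2}\le C\mathcal{E}(w)=C\mathcal{E}_m(e_m(t))$ for any $p\in B$ by \eqref{eq:morrey inequality}, hence $\|e_m(\cdot,t)\|_{\infty}\le\sqrt{C}\,\mathcal{E}_{m}(e_m(t))^{1/2}\le Ct\bigl(h^{2}+h\sqrt{\mu_{\max}^{m}r_{\max}^{m}}\bigr)$, as required.

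The main obstacle is the spatial rate estimate in the second paragraph: the key is to recognize that the discrete Laplacian evaluated on a sufficiently regular function is exactly a local $\psi_x^m\,d\mu$-weighted average of the true Laplacian, so that the error is the oscillation of $\Delta\phi$ over the single cell $K_{m,x}$, which Lemma \ref{lem:Poincare inequality} then controls with the sub-geometric factor $\sqrt{\mu_{\max}^{m}r_{\max}^{m}}$ after summing over $V_m$; a cruder bound (e.g.\ the plain modulus of continuity of $\Delta\phi$) would not produce the correct power of $m$. A secondary nuisance is bookkeeping the time-consistency errors with only the regularity supplied by the hypotheses, where the integral form of the Taylor remainder together with the spectral Sobolev bound of Lemma \ref{lem: sobolev} suffices.
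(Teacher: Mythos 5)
Your overall architecture (consistency plus stability, the splitting of the residual into a time-discretization error and a space-discretization error, the identity exhibiting $\mu_{m,x}^{-1}H_{m}\phi(x)$ as the $\psi_{x}^{m}d\mu$-average of $\Delta\phi$, Jensen plus Lemma \ref{lem:Poincare inequality} for the spatial term, and Lemma \ref{lem:wave bound} for stability) coincides with the paper's proof. There is, however, one genuine gap: your estimate $\|R_{\mathrm{time}}(k)\|_{m}\leq Ch^{3}$. You propose to obtain it by writing $R_{\mathrm{time}}$ as $\int_{0}^{h}(h-s)\bigl[u_{tt}(\cdot,ht+s)-2u_{tt}(\cdot,ht)+u_{tt}(\cdot,ht-s)\bigr]ds$ and estimating the integrand pointwise via Lemma \ref{lem: sobolev}, as in the proof of Theorem \ref{thm: existence}. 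But under the stated regularity ($\sum\alpha_{n}^{2}\lambda_{n}^{3}=\mathcal{E}(\Delta f)<\infty$ and no more), that route only controls $\mathcal{E}$ of the second difference of $u_{tt}$, which is $\sum\alpha_{n}^{2}\lambda_{n}^{3}\min(4,\lambda_{n}s^{2})^{2}/4=O(1)$ (in fact $o(1)$ with no rate); to get the integrand $O(s)$ in sup norm you would need roughly $\sum\alpha_{n}^{2}\lambda_{n}^{4}<\infty$. So the sup-norm argument yields only $\|R_{\mathrm{time}}\|_{\infty}=o(h^{2})$, hence $\|R_{\mathrm{time}}\|_{m}=o(h^{2})$, and after dividing by $h$ in the stability step your final bound degrades to $o(th)$ rather than the claimed $Ct(h^{2}+\sqrt{\mu^{m}r^{m}}h)$.

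The repair is exactly what the paper does, and it explains why Lemma \ref{lem:two_norm_difference} exists at all: estimate $R_{\mathrm{time}}$ in $L^{2}$ and in energy \emph{spectrally}, splitting the eigenvalue sum at an index $N$ with $\lambda_{N}=\Theta(h^{-2})$ and using $|\cos(\sqrt{\lambda_{n}}h)-1+\tfrac{1}{2}\lambda_{n}h^{2}|\leq\min(\tfrac{1}{24}\lambda_{n}^{2}h^{4},\,2+\tfrac12\lambda_nh^2)$, which gives $\|R_{\mathrm{time}}\|_{2}=O(h^{3})$ and $\mathcal{E}(R_{\mathrm{time}})^{1/2}=O(h^{2})$ with only $\mathcal{E}(\Delta f)<\infty$. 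Since the quantity you actually need is the discrete grid norm $\|\cdot\|_{m}$, not the $L^{2}$ norm, you must then transfer via Lemma \ref{lem:two_norm_difference}, which costs the extra term $C\sqrt{\mu_{\max}^{m}r_{\max}^{m}\,\mathcal{E}(R_{\mathrm{time}})}=O(\sqrt{\mu^{m}r^{m}}\,h^{2})$ --- this is precisely where the second summand $\sqrt{\mu^{m}r^{m}}h$ in the theorem's conclusion comes from. Your proposal never invokes Lemma \ref{lem:two_norm_difference} and so has no mechanism for passing a sharp $L^{2}$ rate to the grid norm; the same issue affects your treatment of $e_{m}(\cdot,1)$. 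The rest of your argument (the spatial rate, the stability bookkeeping with $\mathcal{E}_{H}=h^{2}\mathcal{E}_{m}$, and the final sup-norm conclusion via the resistance estimate) is sound and matches the paper.
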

\begin{proof}
Assume $g=0$ for simplicity. Let $u=\sum_{n}\alpha_{n}\cos(\sqrt{\lambda_{n}}t)\varphi_{n}$.
By Theorem \ref{thm: existence}, the classical solution $u$ exists
and $\mathcal{E}(\Delta u)<\infty$. The discrete wave equation on
$V_{m}$ comes from discretization of $u_{tt}$ and $\Delta$ as follows:

\begin{eqnarray*}
 &  & u(x,h(t+1))-2u(x,ht)+u(x,h(t-1))\\
 & \approx & h^{2}u_{tt}(x,ht)\\
 & = & h^{2}\Delta u(x,ht)\\
 & \approx & h^{2}\mu_{m.x}^{-1}H_{m}u(x,ht)
\end{eqnarray*}
So we want to estimate the error that appears in those two discretizations. 

For the first error, let 
\[
\text{err}_{1}(x,t)=u(x,h(t+1))-2u(x,ht)+u(x,h(t-1))-h^{2}u_{tt}(x,ht).
\]
We have 
\begin{eqnarray*}
\text{err}_{1}(x,t) & = & 2\sum_{n}\left(\cos(\sqrt{\lambda_{n}}h)-1+\frac{1}{2}\lambda_{n}h^{2}\right)\alpha_{n}\cos(\sqrt{\lambda_{n}}ht)\varphi_{n}(x).
\end{eqnarray*}
Using $|\cos(\sqrt{\lambda_{n}}h)-1+\frac{1}{2}\lambda_{n}h^{2}|<\frac{1}{24}\lambda_{n}^{2}h^{4}$,
we get
\begin{eqnarray*}
||\text{err}_{1}||_{2}^{2} & = & \sum_{n=0}^{\infty}4\left(\cos(\sqrt{\lambda_{n}}h)-1+\frac{1}{2}\lambda_{n}h^{2}\right)^{2}\alpha_{n}^{2}\cos(\sqrt{\lambda_{n}}h_{m}n)^{2}\\
 & \leq & \frac{1}{144}\sum_{n=0}^{N}\lambda_{n}^{4}h^{8}\alpha_{n}^{2}+\sum_{n=N+1}^{\infty}(4+\lambda_{n}h^{2})^{2}\alpha_{n}^{2}\\
 & \leq & \frac{h^{8}\lambda_{N}}{144}\mathcal{E}(\Delta f)+(\frac{32}{\lambda_{N}^{3}}+\frac{2h^{4}}{\lambda_{N}})\mathcal{E}(\Delta f)
\end{eqnarray*}
for any $N$. By \cite[Thm 4.1.5]{kigami2001analysis}, $\lambda_{n}=\Theta(n^{\alpha})$
for some $\alpha>0$. So, we can choose $\lambda_{N}=\Theta(\frac{1}{h^{2}})$.
Thus, $||err_{1}||_{2}^{2}=O(h^{6})$. Similarly, we have $\mathcal{E}(err_{1})=O(h^{4})$.
Using Lemma \ref{lem:two_norm_difference}, we have 
\[
||\text{err}_{1}||_{m}=O(h^{3}+\sqrt{\mu^{m}r^{m}}h^{2}).
\]

For the second error appears in $\Delta\approx\mu_{m.x}^{-1}H_{m}$,
let
\[
\text{err}_{2}(x,ht)=\Delta u(x,ht)-\mu_{m.x}^{-1}H_{m}u(x,ht).
\]
Using $H_{m}u=\int\Delta u\ \psi_{x}^{(m)}d\mu$\cite[A.2.5]{kigami2001analysis},
we obtain 
\begin{eqnarray*}
||\text{err}_{2}||_{m}^{2} & = & \sum_{x}\left|\mu_{m,x}^{-1}\int(\Delta u(x)-\Delta u(y))\ \psi_{x}^{(m)}(y)dy\right|^{2}\mu_{m,x}\\
 & \leq & \sum_{x}\int\left|\Delta u(y)-\Delta u(x)\right|^{2}\psi_{x}^{(m)}(y)dy\\
 & \leq & \sum_{x}\int_{K_{m,x}}\left|\Delta u(y)-\Delta u(x)\right|^{2}dy.
\end{eqnarray*}
Using Lemma \ref{lem:Poincare inequality}, we have 
\[
||\text{err}_{2}||_{m}=O(\sqrt{\mu^{m}r^{m}}).
\]

Let $e(x,t)=u(x,ht)-u_{m}(x,t)$. Then, $e$ satisfies the graph wave
equation: 
\[
e(t+1)-2e(t)+e(t-1)=h^{2}\mu_{m,x}^{-1}H_{m}e(t)+\text{err}_{1}(t)+h_{m}^{2}\text{err}_{2}(t).
\]
Also, we have $e(0)=0$ and $||e(x,1)||_{m}=O(h^{3}+\sqrt{\mu^{m}r^{m}}h^{2})$
by similar estimates. Thus, Lemma \ref{lem:wave bound} implies 
\begin{eqnarray*}
\mathcal{E}_{m}(e)^{1/2} & = & \mathcal{E}_{\mu_{m,x}^{-1}H_{m}}(e)^{1/2}\\
 & = & \frac{\sqrt{2}}{h}\mathcal{E}_{\frac{h^{2}}{2}\mu_{m,x}^{-1}H_{m}}(e)^{1/2}\\
 & = & O(th^{2}+t\sqrt{\mu^{m}r^{m}}h).
\end{eqnarray*}
And the result follows from $||e||_{\infty}=O(\mathcal{E}_{m}(e)^{1/2})$.\end{proof}
\begin{example*}
In Sierpinski Gasket with uniform measure, it is known that \cite[Example 3.7.3]{kigami2001analysis}
\begin{eqnarray*}
\mu_{m,x}^{-1}H_{m}f & = & \frac{3}{2}5^{m}(\frac{1}{\deg(x)}\sum_{x\sim_{m}y}f(y)-f(x))\\
 & \triangleq & \frac{3}{2}5^{m}\Delta_{m}f.
\end{eqnarray*}
 Since $\Delta_{m}f$ is a graph Laplacian, the eigenvalues of $-\Delta_{m}$
are less than or equal to $2$. Since the condition of Theorem \ref{thm:na error}
is satisfied for $h_{m}^{2}\leq5^{-m}$, we take $h_{m}=5^{-m/2}$.
The difference equation becomes
\[
\frac{u(h_{m}(n+1))-2u(h_{m}n)+u(h_{m}(n-1))}{h_{m}^{2}}=\frac{3}{2}5^{m}\Delta_{m}u.
\]
Note that the constant $\sqrt{\frac{3}{2}5^{m}}$ is the scaled propagation
speed. In $[0,1]$, the constant is $2^{m}$, which is the inverse
of the grid size. Thus, the propagation speed in $[0,1]$ is same
for all $m$ but it increases as $m$ increases in SG. And this gives
a heuristic reason that the wave in SG doesn't have finite speed,
which was first observed in \cite{dalrymple1999fractal}.
\end{example*}

\section{Infinite Wave Propagation Speed And Heat Kernel Lower Bound}

In this section, we use heat kernel estimate and a relation between
wave and heat equations to obtain some off-diagonal behaviors for
the wave equation. Since we need Neumann heat kernel estimate, we
assume $B=\emptyset$ in this and next section.
\begin{lem}
\label{lem:heatwave}Assume $f\in\doml$ with $\Delta f\in\dome$
and $g\in\doml$. Let $u$ be the solution of the wave equation. Let
$v(x,t)=\intop_{-\infty}^{\infty}\frac{1}{\sqrt{4t}}\exp(-\frac{s^{2}}{4t})u(x,s)ds$\textup{.
Then $v$ is the solution of heat equation:}
\[
\begin{cases}
\begin{array}{ccll}
v_{t}(x,t) & = & \Delta v(x,t)\quad & (x\in K,\ t\in\mathbb{R})\\
v(x,0) & = & f(x) & (x\in K)
\end{array}.\end{cases}
\]
\end{lem}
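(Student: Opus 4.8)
The plan is to substitute the eigenfunction expansion \eqref{eq:weaksol} of $u$ into the integral defining $v$, interchange summation with integration, and evaluate the resulting Gauss--Weierstrass transforms of $\cos$ and $\sin$ term by term; this turns the lemma into the standard description of the heat semigroup in the eigenbasis $\{\varphi_{n}\}$.

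First I would record the a priori bounds $\sum_{n}|\alpha_{n}\varphi_{n}(x)|<\infty$ and $\sum_{n}|\beta_{n}\varphi_{n}(x)|<\infty$, \emph{uniformly in $x$}. Indeed $f\in\doml$ with $\Delta f\in\dome$ forces $\sum\alpha_{n}^{2}\lambda_{n}^{3}<\infty$ and hence $\sum\alpha_{n}^{2}\lambda_{n}<\infty$, while $g\in\doml$ gives $\sum\beta_{n}^{2}\lambda_{n}^{2}<\infty$ and $\sum\beta_{n}^{2}=\|g\|_{2}^{2}<\infty$, hence $\sum\beta_{n}^{2}\lambda_{n}<\infty$ by Cauchy--Schwarz. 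Applying the sign trick from the proof of Theorem \ref{thm: existence} together with $\|\cdot\|_{\infty}\leq C\mathcal{E}(\cdot)^{1/2}$ from Lemma \ref{lem: sobolev} to $\sum_{n}\text{sgn}(\alpha_{n}\varphi_{n}(x))\,\alpha_{n}\varphi_{n}$ and to the analogous function built from the $\beta_{n}$ then yields both bounds. In particular \eqref{eq:weaksol} converges absolutely and uniformly in $s$ on bounded time intervals, and $|u(x,s)|\leq A+B|s|$ with $A,B$ independent of $x$.

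Next, because $\tfrac{1}{\sqrt{4\pi t}}e^{-s^{2}/4t}(1+|s|)$ is integrable in $s$, the previous bounds give $\sum_{n}\int_{-\infty}^{\infty}(\cdots)\,ds<\infty$, so I may swap $\sum_{n}$ with the integral. I would then use the elementary identities
\[
\int_{-\infty}^{\infty}\frac{1}{\sqrt{4\pi t}}e^{-s^{2}/4t}\cos(\sqrt{\lambda_{n}}\,s)\,ds=e^{-\lambda_{n}t},\qquad\int_{-\infty}^{\infty}\frac{1}{\sqrt{4\pi t}}e^{-s^{2}/4t}\,\frac{\sin(\sqrt{\lambda_{n}}\,s)}{\sqrt{\lambda_{n}}}\,ds=0,
\]
the first being the Fourier transform of a Gaussian and the second vanishing by oddness, to obtain $v(x,t)=\sum_{n}\alpha_{n}e^{-\lambda_{n}t}\varphi_{n}(x)$. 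For $t>0$ the factor $e^{-\lambda_{n}t}$ decays faster than any power of $\lambda_{n}$, so $\sum\alpha_{n}^{2}\lambda_{n}^{2}e^{-2\lambda_{n}t}<\infty$; hence $v(\cdot,t)\in\doml$ with $\Delta v(x,t)=-\sum_{n}\alpha_{n}\lambda_{n}e^{-\lambda_{n}t}\varphi_{n}(x)$, and the same decay (locally uniform in $t\in(0,\infty)$) lets me differentiate the series in $t$ term by term to get $v_{t}=-\sum_{n}\alpha_{n}\lambda_{n}e^{-\lambda_{n}t}\varphi_{n}=\Delta v$. Finally, dominated convergence on the series, with summable majorant $\sum_{n}|\alpha_{n}\varphi_{n}(x)|$, gives $v(x,t)\to\sum_{n}\alpha_{n}\varphi_{n}(x)=f(x)$ as $t\downarrow0$, uniformly in $x$, so $v$ is continuous up to $t=0$ with $v(\cdot,0)=f$.

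I expect the only real obstacle to be the first step and the Fubini interchange over the whole line: one has to extract enough absolute, uniform control of \eqref{eq:weaksol} in the \emph{time} variable for the Gaussian weight to absorb the at-most-linear growth of $u(x,s)$; everything after that is the routine heat-semigroup computation. As an alternative I could argue on the kernel side --- differentiate under the integral, use $\partial_{t}G=\partial_{s}^{2}G$ for the unit-mass kernel $G(s,t)=\tfrac{1}{\sqrt{4\pi t}}e^{-s^{2}/4t}$, integrate by parts twice in $s$ and invoke $u_{ss}=\Delta u$, then pull $\Delta$ outside the integral --- but this needs the same growth bounds on $u$ and $u_{s}$ to kill the boundary terms and to justify commuting $\Delta$ with the integral, so it is not really shorter.
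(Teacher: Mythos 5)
Your argument is correct and takes essentially the same route as the paper: the paper's proof likewise extracts the linear-in-time bound $\|u(\cdot,t)\|_{\infty}\leq A'+B't$ from the eigenfunction expansion, notes the Gaussian weight absorbs this growth, and then declares the rest a ``direct verification'' --- you have simply written out that verification (term-by-term Gauss--Weierstrass transform of $\cos$ and oddness for $\sin$), with the uniform absolute convergence needed for Fubini supplied by the same sign trick the paper uses in Theorem \ref{thm: existence}. One small point worth noting: you silently replaced the paper's kernel $\frac{1}{\sqrt{4t}}e^{-s^{2}/4t}$ by the correctly normalized $\frac{1}{\sqrt{4\pi t}}e^{-s^{2}/4t}$, which is indeed what is required for $v(\cdot,0)=f$ rather than $\sqrt{\pi}f$.
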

\begin{proof}
By theorem \ref{thm: existence}, the classical solution $u$ exists.
Since 
\[
u=\alpha_{1}+\beta_{1}t+\sum_{n=2}^{\infty}\alpha_{n}\cos(\sqrt{\lambda_{n}}t)\varphi_{n}+\sum_{n=2}^{\infty}\beta_{n}\frac{\sin(\sqrt{\lambda_{n}}t)}{\sqrt{\lambda_{n}}}\varphi_{n},
\]
the energy $\mathcal{E}(u(t),u(t))+||u_{t}(t)||_{2}\leq A+Bt^{2}$
for some $A$ and $B$. Thus, $||u(t)||_{\infty}\leq A'+B't$. Since
$N(t,s)\rightarrow0$ rapidly as $s\rightarrow\infty$, $v$ is well-defined
and the result follows by direct verification.
\end{proof}
In \cite{sikora2004riesz}, Adam Sikora proved that for a large class
of self-adjoint operator with Gaussian off-diagonal estimate, the
heat kernel estimates are related to the propagation speed of the
wave equation. For homogeneous hierarchical fractals, we have sub-Gaussian
estimate\cite{barlow1998diffusions} and this makes the propagation
speed infinite.
\begin{thm}
\label{thm:infinite_speed}Suppose the heat kernel satisfies the sub-Gaussian
lower bound:

\[
p(x,y,t)>C\ \exp(-\frac{1}{t^{\beta}})\quad(x,y\in K,\ 1>t>0)
\]
 where $\beta<1$. Assume $f\in\doml$, $\Delta f\in\dome$, $f\geq0$,
$f\neq0$ and $g=0$. Let $u(x,t)$ be the solution of the wave equation.
Then, for all $x\in K$ and $\delta<1$, there is $t<\delta$ such
that $u(x,t)>0$.\end{thm}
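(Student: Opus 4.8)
The plan is to argue by contradiction, pitting the assumed sub-Gaussian \emph{lower} bound on the heat kernel against a Gaussian-type \emph{upper} bound for the associated heat solution supplied by Lemma~\ref{lem:heatwave}. Suppose the conclusion fails: there exist $x_0\in K$ and $\delta_0\in(0,1)$ with $u(x_0,s)\le 0$ for every $s\in(0,\delta_0)$. Since $g=0$, the solution \eqref{eq:weaksol} reduces to $u(x,s)=\sum_n\alpha_n\cos(\sqrt{\lambda_n}s)\varphi_n(x)$, which is even in $s$, so in fact $u(x_0,s)\le 0$ for all $|s|<\delta_0$. Moreover, because $g=0$ the quantities $\|u(\cdot,s)\|_2^2=\sum_n\alpha_n^2$ and $\mathcal{E}(u(\cdot,s))\le\sum_n\alpha_n^2\lambda_n=\mathcal{E}(f)$ are bounded by their values at $s=0$, so $\|u(\cdot,s)\|_\infty$ is bounded uniformly in $s$ (alternatively one may use the bound $\|u(\cdot,s)\|_\infty\le A'+B'|s|$ noted in the proof of Lemma~\ref{lem:heatwave}, which suffices equally well); fix $M=M(f)<\infty$ with $\|u(\cdot,s)\|_\infty\le M$.

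Now set $v(x,t)=\int_{-\infty}^{\infty}\frac{1}{\sqrt{4t}}e^{-s^2/4t}u(x,s)\,ds$; by Lemma~\ref{lem:heatwave} this is the heat solution with $v(\cdot,0)=f$, hence $v(x_0,t)=\int_K p(x_0,y,t)f(y)\,d\mu(y)$. The hypothesized heat kernel lower bound gives, for $0<t<1$,
\[
v(x_0,t)\ \ge\ C\,e^{-1/t^{\beta}}\int_K f\,d\mu\ =\ C\|f\|_{1}\,e^{-1/t^{\beta}},
\]
and $\|f\|_1>0$ because $f\ge 0$, $f\not\equiv 0$, and $\mu$ charges every nonempty open set.

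For the opposite inequality, split the defining integral for $v$ at $|s|=\delta_0$. The part over $|s|<\delta_0$ is $\le 0$ by the choice of $x_0$, while for the tail we use $\|u(\cdot,s)\|_\infty\le M$ together with $\int_{\delta_0}^{\infty}e^{-s^2/4t}\,ds\le\delta_0^{-1}\int_{\delta_0}^{\infty}s\,e^{-s^2/4t}\,ds=(2t/\delta_0)\,e^{-\delta_0^2/4t}$, obtaining for $0<t<1$
\[
v(x_0,t)\ \le\ \frac{1}{\sqrt{4t}}\cdot\frac{4Mt}{\delta_0}\,e^{-\delta_0^{2}/4t}\ =\ \frac{2M\sqrt t}{\delta_0}\,e^{-\delta_0^{2}/4t}.
\]
Combining the two displays, for all $0<t<\min(\delta_0,1)$ we get $C\|f\|_1\,e^{-1/t^{\beta}}\le(2M\sqrt t/\delta_0)\,e^{-\delta_0^{2}/4t}$, and taking logarithms, $\ \dfrac{\delta_0^{2}}{4t}-t^{-\beta}\ \le\ \log\!\Big(\dfrac{2M\sqrt t}{\delta_0 C\|f\|_1}\Big)$. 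The left-hand side equals $t^{-1}\big(\tfrac14\delta_0^{2}-t^{1-\beta}\big)$, which tends to $+\infty$ as $t\to 0^{+}$ \emph{exactly because} $\beta<1$, whereas the right-hand side tends to $-\infty$; this contradiction proves the theorem.

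The genuinely essential step — and the only place the hypothesis $\beta<1$ is used — is this final comparison of the rates $t^{-\beta}$ and $\delta_0^{2}/4t$: the sub-Gaussian lower bound decays strictly slower than the Gaussian tail produced by the subordination formula, which is what forces the heat solution to be strictly positive everywhere in arbitrarily short time and thereby forbids $u(x_0,\cdot)\le 0$ near $0$. The remaining ingredients — the uniform (or at worst linear-in-$s$) bound on $u$, the elementary Gaussian tail estimate, and keeping track of the normalization constant in Lemma~\ref{lem:heatwave} — are routine; replacing $M$ by $A'+B'|s|$ would change only a polynomial prefactor and not the argument.
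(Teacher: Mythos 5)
Your proposal is correct and follows essentially the same route as the paper: subordinate the wave solution to the heat solution via Lemma~\ref{lem:heatwave}, bound $v(x_0,t)$ from below by the sub-Gaussian heat kernel estimate and from above by a Gaussian tail integral using the assumed nonpositivity of $u(x_0,\cdot)$ near $0$ (together with evenness in $s$ and the uniform bound on $\|u\|_\infty$), and derive a contradiction from comparing $t^{-\beta}$ with $\delta_0^2/4t$ as $t\to0^+$. Your write-up is in fact slightly more careful than the paper's (explicit evenness, the correct negation with $\delta_0$, and an explicit Gaussian tail bound in place of an asymptotic expansion), but the underlying argument is identical.
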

\begin{proof}
Let $v(x,t)=\intop_{-\infty}^{\infty}\frac{1}{\sqrt{4t}}\exp(-\frac{s^{2}}{4t})u(x,s)ds$
as defined in Lemma \ref{lem:heatwave}. Since $v$ is the solution
of the heat equation with initial value $f$, for $t<1$, we have
\begin{eqnarray*}
v(x,t) & = & \int p(x,y,t)f(y)dy\\
 & > & C\int\exp(-\frac{1}{t^{\beta}})f(y)dy\\
 & = & C||f||_{1}\exp(-\frac{1}{t^{\beta}}).
\end{eqnarray*}
Take $x$ be any point in $K$. Suppose, on the contrary, $u(x,t)\leq0$
for $t<1$. Since $g=0$, we have $\sup_{t}||u(t)||_{\infty}<A$ for
some $A>0$ and

\begin{eqnarray*}
v(x,t) & \leq & 2\intop_{\delta}^{\infty}\frac{1}{\sqrt{4t}}\exp(-\frac{s^{2}}{4t})u(x,s)ds\\
 & < & 2A\intop_{\delta}^{\infty}\frac{1}{\sqrt{4t}}\exp(-\frac{s^{2}}{4t})ds\\
 & = & 2A\exp(-\frac{\delta^{2}}{4t})\left(\frac{2t}{k}+O(t^{2})\right).
\end{eqnarray*}
It leads to a contradiction that $C||f||_{1}\exp(-\frac{1}{t^{\beta}})<2A\exp(-\frac{\delta^{2}}{4t})\left(\frac{2t}{k}+O(t^{2})\right)$
for $t<\delta$ because $\beta<1$.
\end{proof}
However, the wave oscillates in space instead of spreading, as will
be illustrated by the following example. And this says we cannot expect
$u(x,t)$ to be positive within short times even if $f>1$ and $g=0$.
\begin{example}
\label{example_fra}Consider the Laplacian with Neumann boundary condition
on SG. Using spectral decimation\cite{shima1991eigenvalue,fukushima1992spectral},
we can have the estimate 
\[
-1.5\leq\varphi_{4}\leq2,
\]
where $\varphi_{4}$ is shown in Fig \ref{example_fra}. Now, we define
$f$ by combining copies of $\varphi_{4}$ as shown. On each level,
the solution of the wave equation is of the form $\cos(\sqrt{\lambda}t)\varphi$.
So, the wave oscillates faster on the upper level. Let $\tilde{f}=4f+7$
and $\tilde{u}$ be the wave equation with initial position $\tilde{f}$.
The classical solution exists even though $\mathcal{E}(f)=\infty$.
Although $\tilde{f}\geq1$, $\tilde{u}$ is not positive even in a
short time interval because $\varphi_{4}=2$ at some point.
\end{example}
\begin{center}
\begin{figure}
\hfill{}%
\begin{minipage}[t]{0.4\columnwidth}%
\begin{center}
\includegraphics[width=1\columnwidth]{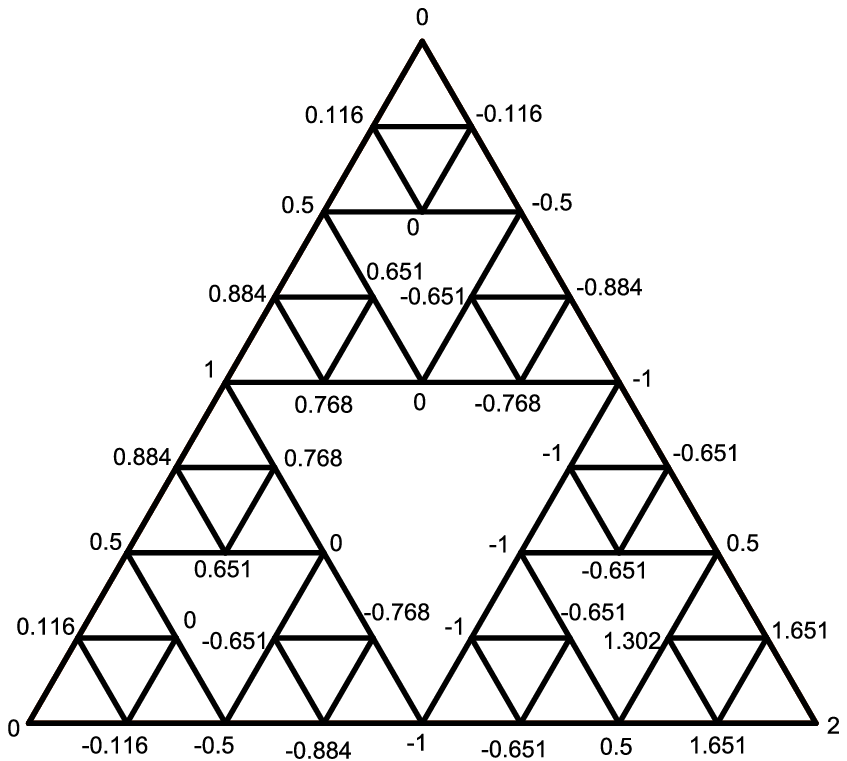}\\
$\varphi_{4}$
\par\end{center}%
\end{minipage}\hfill{}%
\begin{minipage}[t]{0.4\columnwidth}%
\begin{center}
\includegraphics[width=1\columnwidth]{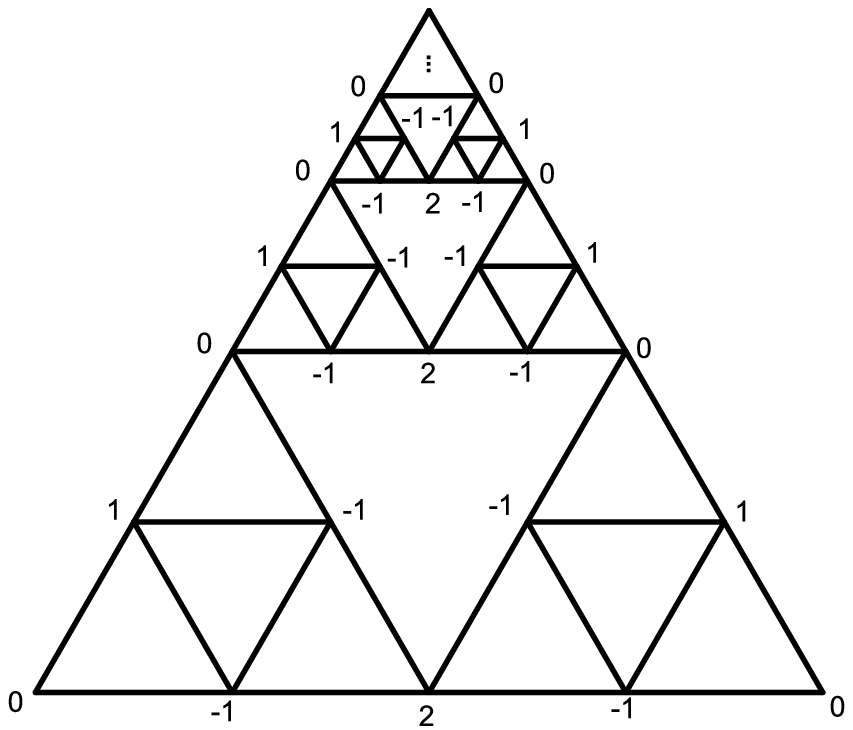}\\
$f$
\par\end{center}%
\end{minipage}\hfill{}

\caption{Functions in Example \ref{example_fra}}
\end{figure}

\par\end{center}

\section{Wave Kernel And Heat Kernel Upper Bound}

Since the wave solution has infinite propagation speed for some fractals,
we would like to get off-diagonal estimates of the solution of the
wave equation for those fractals.

We define $P_{t}u$ be the heat solution with initial data $u$ after
time $t$ where $t>0$, that is, 
\[
P_{t}(\sum a_{n}\varphi_{n})=\sum a_{n}e^{-\lambda_{n}t}\varphi_{n}
\]
where $u=\sum a_{n}\varphi_{n}$. Also, define $W_{t}u$ to be the
solution of the wave equation with initial data $u$ and initial velocity
$0$ after time $t$ where $t>0$, that is, 
\[
W_{t}(\sum a_{n}\varphi_{n})=\sum a_{n}\cos(\sqrt{\lambda_{n}}t)\varphi_{n}.
\]

In this section, we assume the heat equation satisfies the following
kernel upper bound:

\begin{equation}
p(x,y,t)\leq\frac{C}{t^{\alpha}}\exp\left(-C\left(\frac{d(x,y)^{\beta}}{t}\right)^{1/(\beta-1)}\right)\label{eq:heat_assumption}
\end{equation}
for some $\alpha$ and some $\beta>2$ which is true for many fractals
\cite{hambly1999transition,grigor2008off}. 
\begin{lem}
\label{lem:Phragm=0000E9n=002013Lindel=0000F6f}Assume the heat kernel
satisfies the upper bound \eqref{eq:heat_assumption}. For $f\in L^{1}(K)$,
we have
\[
|P_{t+1/z}f(x)|\leq\frac{C}{t^{\alpha}}\exp\left(-Cr^{\frac{\beta}{\beta-1}}z^{\frac{2-\beta}{\beta-1}}\text{Re}z\right)||f||_{1}
\]
for $t>0$ where $r=d(x,\text{supp}f)$.\end{lem}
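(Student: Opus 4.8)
The plan is to interpolate, via a Phragmén–Lindelöf argument in the variable $z$, between the sub-Gaussian off-diagonal bound that \eqref{eq:heat_assumption} gives for the real-time semigroup and a crude on-diagonal bound for the complex-time semigroup. For the first: if $s>0$ then $d(x,y)\ge r$ for every $y\in\operatorname{supp}f$, so
\[
|P_{s}f(x)|\le\int_{\operatorname{supp}f}p(x,y,s)\,|f(y)|\,dy\le\frac{C}{s^{\alpha}}\exp\!\left(-C\Bigl(\frac{r^{\beta}}{s}\Bigr)^{1/(\beta-1)}\right)\|f\|_{1}.
\]
For the second: when $\operatorname{Re}w>0$ the kernel is $p(x,y,w)=\sum_{n}e^{-\lambda_{n}w}\varphi_{n}(x)\varphi_{n}(y)$, and Cauchy–Schwarz together with the on-diagonal ($d=0$) case of \eqref{eq:heat_assumption} gives $|p(x,y,w)|\le p(x,x,\operatorname{Re}w)^{1/2}p(y,y,\operatorname{Re}w)^{1/2}\le C(\operatorname{Re}w)^{-\alpha}$, hence $|P_{w}f(x)|\le C(\operatorname{Re}w)^{-\alpha}\|f\|_{1}$. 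Since $\operatorname{Re}(t+1/z)\ge t$ whenever $\operatorname{Re}z>0$, the map $z\mapsto P_{t+1/z}f(x)$ is holomorphic on the right half-plane and bounded there by $M:=Ct^{-\alpha}\|f\|_{1}$.

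Now fix $x$, $f$, $t$ and run Phragmén–Lindelöf on the quadrant $Q=\{z:0<\arg z<\pi/2\}$ (the lower quadrant being symmetric). Put
\[
a(z)=c_{0}\,r^{\beta/(\beta-1)}\,e^{i\theta_{0}}\,z^{1/(\beta-1)},\qquad \theta_{0}=\frac{\pi}{2}\cdot\frac{\beta-2}{\beta-1},
\]
with the principal branch. Since $\beta>2$ we have $1/(\beta-1)<1$, so $a$ is holomorphic on $Q$ with $\log|e^{a(z)}|=O(|z|^{1/(\beta-1)})$, and $\theta_{0}\in(0,\pi/2)$ is rigged so that $\arg a=\pi/2$ on the ray $\arg z=\pi/2$ — whence $\operatorname{Re}a\equiv0$ there — while $\operatorname{Re}a(z)=c_{0}\cos\theta_{0}\,r^{\beta/(\beta-1)}z^{1/(\beta-1)}>0$ on the ray $\arg z=0$. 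Set $G(z)=e^{a(z)}P_{t+1/z}f(x)$. On $\arg z=\pi/2$ the complex-time bound gives $|G|\le M$; on $\arg z=0$ the real-time bound at time $t+1/z$ gives $|P_{t+1/z}f(x)|\le M\exp\!\bigl(-C\bigl(r^{\beta}z/(1+tz)\bigr)^{1/(\beta-1)}\bigr)$, which for $c_{0}$ small enough dominates $e^{\operatorname{Re}a(z)}$, so $|G|\le CM$ on $\partial Q$. As $G$ is holomorphic on the right-angled sector $Q$ and of subexponential growth, Phragmén–Lindelöf yields $|G|\le CM$ throughout $Q$, i.e. $|P_{t+1/z}f(x)|\le CM\,e^{-\operatorname{Re}a(z)}$. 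One then checks that on $Q$, $\operatorname{Re}a(z)\gtrsim r^{\beta/(\beta-1)}\operatorname{Re}\bigl(z^{1/(\beta-1)}\bigr)\gtrsim r^{\beta/(\beta-1)}z^{(2-\beta)/(\beta-1)}\operatorname{Re}z$, which is the asserted estimate; the conjugate quadrant handles $\operatorname{Im}z<0$.

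The crux is the design of the holomorphic regulariser $a$. The rotation $e^{i\theta_{0}}$ must be tuned — this is exactly where $\beta>2$ enters — so that $a$ contributes nothing on the imaginary boundary ray, where only the on-diagonal bound is available, yet grows like $|z|^{1/(\beta-1)}$ on the real ray, where it has to be swallowed by the sub-Gaussian factor; simultaneously $1/(\beta-1)<1$ keeps $G$ of small enough order for Phragmén–Lindelöf on a $\pi/2$-sector. After that it is bookkeeping: pinning the constant $c_{0}$ against the constant in \eqref{eq:heat_assumption}, verifying that $\operatorname{Re}a(z)$ has the stated lower bound uniformly on the quadrant, and noting that — since on the real ray the sub-Gaussian decay saturates at its value for time $t$ as $|z|\to\infty$ — the absorption step (hence the estimate) is to be read in the parameter range in which \eqref{eq:heat_assumption} is in force and in which the lemma is applied in Theorem~\ref{thm:avg_wave}.
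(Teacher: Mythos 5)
Your proof follows the same route as the paper's: set $u(z)=P_{t+1/z}f(x)$, bound it by $Ct^{-\alpha}\|f\|_{1}$ on the whole right half-plane via Cauchy--Schwarz and the on-diagonal case of \eqref{eq:heat_assumption}, bound it sub-Gaussianly on the positive real axis, and interpolate by a Phragm\'en--Lindel\"of argument on each quadrant (the paper conjugates the quadrant to a strip of width $\pi/2$ first, but that is cosmetic). Your one real departure --- the rotation $e^{i\theta_{0}}$ with $\theta_{0}=\frac{\pi}{2}\cdot\frac{\beta-2}{\beta-1}$ in the regulariser $a(z)$ --- is not merely stylistic: it repairs a gap in the paper's own write-up. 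The paper divides by $\frac{C}{t^{\alpha}}\exp(-Cr^{\beta/(\beta-1)}z^{1/(\beta-1)})$ and asserts the quotient is bounded by $1$ on the whole boundary; but on the ray $\arg z=\pi/2$ that comparison function has modulus $\frac{C}{t^{\alpha}}\exp\bigl(-Cr^{\beta/(\beta-1)}|z|^{1/(\beta-1)}\cos\frac{\pi}{2(\beta-1)}\bigr)$, which decays, while only $|u|\leq Ct^{-\alpha}$ is available there, so the quotient is actually unbounded on that ray. Your choice of $\theta_{0}$ makes $\text{Re}\,a\equiv0$ there, and your end reduction $\cos(\theta_{0}+\frac{\phi}{\beta-1})\geq c_{\beta}\cos\phi$ for $\phi\in[0,\pi/2]$ is correct, since $\frac{\pi}{2}-\theta_{0}-\frac{\phi}{\beta-1}=\frac{1}{\beta-1}(\frac{\pi}{2}-\phi)$. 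One small slip: the intermediate inequality $\text{Re}(e^{i\theta_{0}}z^{1/(\beta-1)})\gtrsim\text{Re}(z^{1/(\beta-1)})$ fails at $\arg z=\pi/2$ (the left side vanishes, the right side does not); only the composite inequality against $|z|^{(2-\beta)/(\beta-1)}\text{Re}\,z$ is true, and that is the one you need.

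The caveat you append at the end points at a genuine defect of the lemma as stated rather than of your argument. On the real ray the decay supplied by \eqref{eq:heat_assumption} is $\exp\bigl(-C(r^{\beta}/(t+1/x))^{1/(\beta-1)}\bigr)$, which saturates at its value for time $t$ as $x\to\infty$, so it absorbs $\exp(\text{Re}\,a(x))\sim\exp\bigl(c_{0}\cos\theta_{0}\,r^{\beta/(\beta-1)}x^{1/(\beta-1)}\bigr)$ only for $x\lesssim1/t$; and indeed for large real $z$ the asserted conclusion would force $P_{t}f(x)=0$, which is absurd. So the lemma needs a restriction of the form $|z|\leq c/t$; this is satisfied where it is invoked in Theorem \ref{thm:avg_wave} (there $t=\sigma/4$ and $|z|\leq4/\sigma$). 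The paper's proof silently has the same problem, and you were right to flag it rather than paper over it.
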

\begin{proof}
By scaling $f$, we may assume $||f||_{1}=1$. Let $u(z)=P_{t+1/z}f(x)$
which is analytic on $\{{\rm Re}z>0\}$. Note that
\begin{eqnarray*}
p(x,y,z) & = & \sum e^{-\lambda_{n}z}\varphi_{n}(x)\varphi_{n}(y)\\
 & \leq & \left(\sum e^{-\lambda_{n}\text{Re}z}\varphi_{n}^{2}(x)\right)^{1/2}\left(\sum e^{-\lambda_{n}\text{Re}z}\varphi_{n}^{2}(x)\right)^{1/2}\\
 & \leq & \frac{C_{1}}{(\text{Re}z)^{\alpha}}.
\end{eqnarray*}
Therefore, we have $|u(z)|\leq\frac{C}{t^{\alpha}}$. On the other
hand, the kernel upper bound tells us that
\[
|u(z)|\leq\frac{C}{t^{\alpha}}\exp\left(-Cr^{\frac{\beta}{\beta-1}}z^{\frac{1}{\beta-1}}\right).
\]
Because of symmetry, we only prove the statement for the first quadrant.
Consider the strip $\Omega=\{x+iy:\ 0<y<\frac{\pi}{2}\}$ and let
\[
v(z)=u(\exp(z)).
\]
By assumption, $|v(z)|\leq\frac{C}{t^{\alpha}}$ for $z\in\overline{\Omega}$
and $|v(x)|\leq\frac{C}{t^{\alpha}}\exp(-C\exp(\alpha x))$ for $x\in\mathbb{R}$.
Let 
\[
f(z)=\frac{v(z)}{\frac{C}{t^{\alpha}}\exp(-Cr^{\frac{\beta}{\beta-1}}\exp(\frac{1}{\beta-1}z))}.
\]
Now $f$ is analytic on the strip $\Omega$, continuous on $\overline{\Omega}$
and bounded by $1$ on boundary of $\Omega$. Also, it satisfies a
decay estimate 
\[
|f(z)|\leq\exp(Cr^{\frac{\beta}{\beta-1}}\exp(\frac{1}{\beta-1}|z|))\quad(z\in\Omega).
\]
By the Phragmén-Lindelöf theorem, we have $|f|\leq1$ on $\Omega$.
Thus, for $z\in\{\text{Re}z>0\}$, we have

\begin{eqnarray*}
|u(z)| & \leq & \frac{C}{t^{\alpha}}|\exp(-Cr^{\frac{\beta}{\beta-1}}z^{\frac{1}{\beta-1}})|\\
 & \leq & \frac{C}{t^{\alpha}}\exp\left(-Cr^{\frac{\beta}{\beta-1}}z^{\frac{1}{\beta-1}}\sin(\frac{\pi}{2}-|\mbox{arg}z|)\right)\\
 & \leq & \frac{C}{t^{\alpha}}\exp\left(-Cr^{\frac{\beta}{\beta-1}}z^{\frac{2-\beta}{\beta-1}}\text{Re}z\right)
\end{eqnarray*}

\end{proof}
Recall that the heat equation is the averaged wave equation and we
can use this to recover the lower frequency of the solution of wave
equation. Therefore, mollified solutions is exponentially small outside
the support of initial function when time is small.

\begin{lem}
\label{lem:min_expoent}For $\frac{1}{4t}\geq\gamma>0$ and$-1<\alpha<0$,
we have
\[
\min_{\text{Re}(z)=\gamma}\left|\frac{1}{z^{-1}+t}\right|^{\alpha}\text{Re}(\frac{1}{z^{-1}+t})\geq C_{\alpha}t^{-\alpha/2}\gamma^{1+\alpha/2}
\]
where $C_{\alpha}$ is a constant depends on $\alpha$ only.\end{lem}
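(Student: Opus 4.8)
The plan is to turn the minimization over the line $\text{Re}(z)=\gamma$ into a one-variable calculus problem. Writing $z=\gamma+is$ with $s\in\mathbb{R}$ and $w=z^{-1}+t$, one has $z^{-1}=\frac{\gamma}{a}-i\frac{s}{a}$ with $a=\gamma^{2}+s^{2}\ge\gamma^{2}$, so $\text{Re}(w)=t+\frac{\gamma}{a}>0$ and, after simplification, $|w|^{2}=t^{2}+\frac{2t\gamma+1}{a}$. Since $\text{Re}(w)>0$ we have $\bigl|\tfrac{1}{w}\bigr|^{\alpha}\,\text{Re}\bigl(\tfrac{1}{w}\bigr)=|w|^{-\alpha}\,\frac{\text{Re}(w)}{|w|^{2}}=\frac{\text{Re}(w)}{|w|^{2+\alpha}}$, so with $b=1/a\in(0,1/\gamma^{2}]$ the quantity to be minimized is exactly
\[
Q(b)=\frac{t+\gamma b}{\bigl(t^{2}+(2t\gamma+1)b\bigr)^{p}},\qquad p:=\frac{2+\alpha}{2}\in\Bigl(\tfrac{1}{2},1\Bigr).
\]
I will also set $q:=1-p=-\alpha/2\in(0,\tfrac{1}{2})$, so that the target lower bound $C_{\alpha}t^{-\alpha/2}\gamma^{1+\alpha/2}$ reads $C_{\alpha}t^{q}\gamma^{1-q}$.

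Next I would determine the shape of $Q$ on $(0,1/\gamma^{2}]$. Computing $\frac{d}{db}\log Q=\frac{\gamma}{t+\gamma b}-\frac{p(2t\gamma+1)}{t^{2}+(2t\gamma+1)b}$ and clearing the two (positive) denominators turns $\frac{d}{db}\log Q=0$ into an affine equation in $b$ with a unique root
\[
b^{*}=\frac{t\bigl(\gamma t(2p-1)+p\bigr)}{\gamma(2t\gamma+1)(1-p)}>0
\]
(note $2p-1=1+\alpha>0$). Comparing the two affine sides at $b=0$ and their slopes shows $\frac{d}{db}\log Q<0$ for $0<b<b^{*}$ and $>0$ for $b>b^{*}$, so $Q$ is strictly decreasing on $(0,b^{*})$ and strictly increasing afterwards. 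Hence $\min_{(0,1/\gamma^{2}]}Q$ equals $Q(b^{*})$ when $b^{*}\le1/\gamma^{2}$ and $Q(1/\gamma^{2})$ otherwise, and it remains to bound both candidates from below.

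For the endpoint value, substituting $b=1/\gamma^{2}$ and using $t^{2}+(2t\gamma+1)/\gamma^{2}=(t+1/\gamma)^{2}$ gives $Q(1/\gamma^{2})=(t+1/\gamma)^{-1-\alpha}$; since $-1-\alpha<0$ and the hypothesis $\gamma\le1/(4t)$ gives $t+1/\gamma\le\frac{5}{4\gamma}$, this is at least $(4/5)^{1+\alpha}\gamma^{1+\alpha}\ge\tfrac{4}{5}\gamma^{1+\alpha}$, and writing $\gamma^{1+\alpha}=(t\gamma)^{\alpha/2}t^{-\alpha/2}\gamma^{1+\alpha/2}$ with $(t\gamma)^{\alpha/2}\ge(1/4)^{\alpha/2}\ge1$ (as $\alpha/2<0$ and $t\gamma\le\tfrac14<1$) completes this case. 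For the interior value, the hypotheses $\gamma\le1/(4t)$ and $p\in(\tfrac12,1)$ bound $b^{*}$ between universal multiples of $t/(\gamma q)$, which in particular makes $t^{2}$ subordinate to $(2t\gamma+1)b^{*}$; bounding the numerator $t+\gamma b^{*}$ below by $\gamma b^{*}$ and the denominator base $t^{2}+(2t\gamma+1)b^{*}$ above by a universal multiple of $t/(\gamma q)$ yields $Q(b^{*})\ge c\,t^{1-p}q^{p-1}\gamma^{p}=c\,q^{-q}\,t^{q}\gamma^{1-q}\ge c\,t^{q}\gamma^{1-q}$, using $q^{-q}\ge1$ for $q\in(0,1)$. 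Combining the two cases gives the lemma with a constant that in fact does not depend on $\alpha$.

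The step that needs the most care is the shape analysis of $Q$ — confirming that $\frac{d}{db}\log Q$ has exactly one sign change, so that the minimum is controlled by just the two candidates $b^{*}$ and $1/\gamma^{2}$ — together with the bookkeeping of where $\gamma\le1/(4t)$ is used: it controls the size of $b^{*}$ and guarantees that $t^{2}$ is the subdominant term, and without it the interior estimate fails. Everything else is elementary manipulation of powers of $t$, $\gamma$, and $q$.
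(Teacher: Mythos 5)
Your proof is correct and follows essentially the same route as the paper: both reduce the problem to a one-variable minimization along the line $\mathrm{Re}(z)=\gamma$ and end up comparing the same two candidate minima, the ``endpoint'' value $(t+\gamma^{-1})^{-1-\alpha}$ and an ``interior'' value of order $t^{-\alpha/2}\gamma^{1+\alpha/2}$, before invoking $\gamma t\le\frac{1}{4}$ to conclude. The only real difference is organizational: the paper splits cases according to whether $\arg\bigl(\frac{1}{z^{-1}+t}\bigr)$ exceeds $\frac{\pi}{4}$ and minimizes $x^{\alpha}(tx^{2}+\gamma)$ directly, while you split by the location of the critical point of $Q(b)$, which makes the monotonicity analysis somewhat more airtight but yields the same bound.
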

\begin{proof}
We may assume $\arg(\frac{1}{z^{-1}+t})\geq0$ because of symmetry.
For $0\leq\arg(\frac{1}{z^{-1}+t})\leq\frac{\pi}{4}$, we have 
\begin{eqnarray*}
\left|\frac{1}{z^{-1}+t}\right|^{\alpha}\text{Re}(\frac{1}{z^{-1}+t}) & \geq & \frac{1}{\sqrt{2}}\left|\frac{1}{z^{-1}+t}\right|^{\alpha+1}\\
 & \geq & \frac{1}{\sqrt{2}}\left|\frac{1}{\gamma^{-1}+t}\right|^{\alpha+1}
\end{eqnarray*}

For $\arg(\frac{1}{z^{-1}+t})>\frac{\pi}{4}$, let $z=\gamma+ix$.
We have 
\[
\frac{1}{z^{-1}+t}=\frac{\gamma^{2}+x^{2}}{(t(\gamma^{2}+x^{2})+\gamma)^{2}+x^{2}}(t(\gamma^{2}+x^{2})+\gamma+ix).
\]
Since $\arg(\frac{1}{z^{-1}+t})>\frac{\pi}{4}$, we have $x>t(\gamma^{2}+x^{2})+\gamma$.
Hence, 
\[
\frac{\gamma^{2}+x^{2}}{(t(\gamma^{2}+x^{2})+\gamma)^{2}+x^{2}}\geq\frac{1}{2}.
\]
Therefore, 
\begin{eqnarray*}
\left|\frac{1}{z^{-1}+t}\right|^{\alpha}\text{Re}(\frac{1}{z^{-1}+t}) & \geq & \left(\frac{1}{2}\right)^{\alpha+1}((t(\gamma^{2}+x^{2})+\gamma)^{2}+x^{2})^{\alpha/2}(t(\gamma^{2}+x^{2})+\gamma)\\
 & \geq & \left(\frac{1}{2}\right)^{\alpha+1}x^{\alpha}(tx^{2}+\gamma)\\
 & \geq & \left(\frac{1}{2}\right)^{\alpha+1}\left(\sqrt{\frac{-\alpha}{2+\alpha}}\right)^{\alpha}\frac{\gamma^{\alpha/2}}{t^{\alpha/2}}(\frac{2}{2+\alpha}\gamma)
\end{eqnarray*}
where the last line comes from minimizing $x$ over $x\geq0$. Combining
the two cases, we get 
\begin{eqnarray*}
\min_{\text{Re}(z)=\gamma}\left|\frac{1}{z^{-1}+t}\right|^{\alpha}\text{Re}(\frac{1}{z^{-1}+t}) & \geq & C_{\alpha}\min(\left|\frac{1}{\gamma^{-1}+t}\right|^{\alpha+1},\gamma^{1+\alpha/2}t^{-\alpha/2})\\
 & \geq & C_{\alpha}t^{-\alpha/2}\min(t^{-1},\gamma)^{1+\alpha/2}
\end{eqnarray*}
\end{proof}
\begin{thm}
\label{thm:avg_wave}Assume the heat kernel satisfies the upper bound
\eqref{eq:heat_assumption}. Let $\phi_{\sigma}(t)=\frac{1}{\sqrt{2\pi\sigma}}\exp(-\frac{t^{2}}{2\sigma})$.
For $\sigma=O\left(\left(\frac{t^{\beta-1}}{r^{\beta}}\right)^{\frac{1}{\beta-2}}\right)$
and $f\in L^{1}$, we have
\[
|(\phi_{\sigma}*Wf)(x,t)|\leq\frac{C}{t^{\alpha+1+\frac{5\beta-5}{\beta-2}}r^{\frac{5\beta}{\beta-2}}\sigma^{5/2}}\exp(-C\left(\frac{r^{2}}{t}\right)^{\frac{\beta}{\beta-2}}\sigma)||f||_{1}
\]
where $r=\text{d}(x,\text{supp}f)$. Furthermore, for $\sigma=O\left(\left(\frac{t^{2\beta-2}}{r^{2\beta}}\right)^{\frac{1}{\beta-2}}\right)$
and $P_{-\sigma}f\in L^{1}$ , we have
\[
|Wf(x,t)|\leq\frac{C}{t^{\alpha+1+\frac{5\beta-5}{\beta-2}}r^{\frac{5\beta}{\beta-2}}\sigma^{5/2}}\exp(-C\left(\frac{r^{2}}{t}\right)^{\frac{\beta}{\beta-2}}\sigma)||P_{-\sigma}f||_{1}
\]
where $r=\text{d}(x,\text{supp}f)$.\end{thm}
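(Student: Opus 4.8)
The plan is to transmute the mollified wave propagator into the complex‑time heat semigroup and then feed in Lemmas~\ref{lem:Phragm=0000E9n=002013Lindel=0000F6f} and~\ref{lem:min_expoent}. First I reduce to functional calculus. Since $\phi_{\sigma}$ is even with Fourier transform $e^{-\sigma\xi^{2}/2}$, a termwise computation on the eigenbasis gives $\phi_{\sigma}*Wf(\cdot,t)=W_{t}P_{\sigma/2}f$, i.e.\ the mollified solution equals $m(-\Delta)f$ with symbol $m(\lambda)=\cos(\sqrt{\lambda}\,t)\,e^{-\sigma\lambda/2}$; likewise $Wf(\cdot,t)=W_{t}P_{\sigma}\bigl(P_{-\sigma}f\bigr)$ has symbol $\cos(\sqrt{\lambda}\,t)e^{-\sigma\lambda}$, and the point of the hypothesis $P_{-\sigma}f\in L^{1}$ is that it furnishes a \emph{smooth} representative $f=P_{\sigma}g$, $g=P_{-\sigma}f$, so that the complex heat evolutions below stay meaningful even slightly inside $\{\operatorname{Re}w\le0\}$ through $P_{w}f=P_{w+\sigma}g$. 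I also record $\|W_{t}P_{\sigma/2}f\|_{\infty}\le C\sigma^{-\alpha}\|f\|_{1}$ from $\sum_{n}\varphi_{n}(x)^{2}e^{-\lambda_{n}\sigma}=p(x,x,\sigma)\le C\sigma^{-\alpha}$, which makes all the series below converge.

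For the transmutation I write $\cos(\sqrt{\lambda}\,t)=\operatorname{Re}e^{-it\sqrt{\lambda}}$ and analytically continue the subordination identity $e^{-a\sqrt{\lambda}}=\frac{a}{2\sqrt{\pi}}\int_{0}^{\infty}s^{-3/2}e^{-a^{2}/(4s)}e^{-\lambda s}\,ds$ to $a=it$. For imaginary $a$ the integrand develops an essential singularity at $s=0$ (there $e^{t^{2}/(4s)}$ appears), so the $s$‑contour must be bent off $(0,\infty)$ to pass around the origin through $\{\operatorname{Re}s<0\}$, where $|e^{t^{2}/(4s)}|$ decays; absorbing $e^{-\sigma\lambda/2}$ by the shift $w=s+\sigma/2$ and applying functional calculus yields
\[
W_{t}P_{\sigma/2}f(x)=\operatorname{Re}\left[\frac{it}{4\sqrt{\pi}}\int_{\Gamma}\bigl(w-\tfrac{\sigma}{2}\bigr)^{-3/2}e^{t^{2}/(4(w-\sigma/2))}\,P_{w}f(x)\,dw\right],
\]
where $\Gamma$ is a Hankel‑type loop around the branch point $w=\sigma/2$ lying in $\{\operatorname{Re}w>0\}$ (legitimate as long as its dip toward $\sigma/2$ stays to the right of $\operatorname{Re}w=0$) and $P_{w}f(x)=\int p(x,y,w)f(y)\,dy$ is the complex‑time heat kernel. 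The exchange of $\int_{\Gamma}$ with the spectral expansion is justified by $\sum_{n}\varphi_{n}(x)^{2}e^{-\lambda_{n}\operatorname{Re}w}\le C(\operatorname{Re}w)^{-\alpha}$ together with the decay of the kernel at both ends of $\Gamma$ (at $\infty$ via $e^{-\lambda\operatorname{Re}w}$, near $w=\sigma/2$ via $e^{t^{2}/(4(w-\sigma/2))}$). This identity is the analytic‑continuation counterpart of the heat/wave relation of Lemma~\ref{lem:heatwave}.

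Then I deform $\Gamma$, inside $\{\operatorname{Re}w>0\}\setminus\{\sigma/2\}$, onto a circle $\mathcal{C}_{\gamma}=\{\tau_{0}+1/z:\operatorname{Re}z=\gamma\}$ that still winds once around $w=\sigma/2$ — possible for $\tau_{0}$ of order $\sigma$ (say $\tau_{0}\in(\sigma/4,\sigma/2)$) and $\gamma$ of order $1/\tau_{0}$ close to $1/(4\tau_{0})$ — and substitute $w=\tau_{0}+1/z$. On the portion of $\mathcal{C}_{\gamma}$ with $|z|$ bounded, Lemma~\ref{lem:Phragm=0000E9n=002013Lindel=0000F6f} gives $|P_{\tau_{0}+1/z}f(x)|\le\tfrac{C}{\tau_{0}^{\alpha}}\exp(-Cr^{\beta/(\beta-1)}|z|^{(2-\beta)/(\beta-1)}\operatorname{Re}z)\|f\|_{1}$, while on the remaining arc (where $w\approx\tau_{0}$) a second application with a smaller offset gives the real‑time‑type decay $\exp(-C(r^{\beta}/\tau_{0})^{1/(\beta-1)})$; Lemma~\ref{lem:min_expoent}, applied with its parameter $=\tau_{0}$ and a fixed exponent $\alpha_{0}\in(-1,0)$, supplies the uniform bound $\min_{\operatorname{Re}z=\gamma}|w^{-1}|^{\alpha_{0}}\operatorname{Re}(w^{-1})\gtrsim\tau_{0}^{-\alpha_{0}/2}\gamma^{1+\alpha_{0}/2}$ that lets the decay be pulled out of the $z$‑integral uniformly and that also controls the kernel. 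On $\mathcal{C}_{\gamma}$ the factor $(w-\tfrac{\sigma}{2})^{-3/2}e^{t^{2}/(4(w-\sigma/2))}$ is bounded by $C\sigma^{-3/2}e^{Ct^{2}/\sigma}$, and absorbing the growth $e^{Ct^{2}/\sigma}$ into the exponential decay is precisely where the range $\sigma=O\bigl((t^{\beta-1}/r^{\beta})^{1/(\beta-2)}\bigr)$ enters: it is exactly the threshold at which the net exponent is $\le-C(r^{2}/t)^{\beta/(\beta-2)}\sigma$. Trading a fixed fraction of that exponential for polynomial decay via $e^{-cX}\le C_{5/2}X^{-5/2}$, together with the factors $\sigma^{-\alpha}$, $\sigma^{-3/2}$ and $\int_{\operatorname{Re}z=\gamma}|z|^{-2}|dz|\sim\gamma^{-1}$ accumulated along the way, produces the stated prefactor $t^{-\alpha-1-(5\beta-5)/(\beta-2)}r^{-5\beta/(\beta-2)}\sigma^{-5/2}$.

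For the second bound the symbol is $\cos(\sqrt{\lambda}\,t)e^{-\sigma\lambda}$; the same transmutation applies (branch point now at $w=\sigma$), but $P_{w}f=P_{w+\sigma}(P_{-\sigma}f)$ is defined for all $\operatorname{Re}w>-\sigma$, so $\Gamma$ may be pushed left almost to the imaginary axis: Lemma~\ref{lem:Phragm=0000E9n=002013Lindel=0000F6f} is used on $\{\operatorname{Re}w\ge\tau_{0}>0\}$, and on the short dip near $w\approx\sigma$ one combines the crude bound $|P_{w}f(x)|\le\|P_{w+\sigma}(P_{-\sigma}f)\|_{\infty}\le C\sigma^{-\alpha}\|P_{-\sigma}f\|_{1}$ with the exponential smallness of $e^{t^{2}/(4(w-\sigma))}$ there, and the stronger hypothesis $\sigma=O\bigl((t^{2\beta-2}/r^{2\beta})^{1/(\beta-2)}\bigr)$ is exactly what makes that dip negligible against the target. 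The main obstacle is the transmutation representation and its deformation: rigorously continuing the subordination formula past its essential singularity, keeping $\Gamma$ in $\{\operatorname{Re}w>0\}$ after the $\sigma/2$‑shift, legitimizing the interchange with the spectral sum, and pinning the geometry of the circles $\mathcal{C}_{\gamma}$ (winding number, and the matching of $\gamma$ with $\tau_{0}$) finely enough that Lemma~\ref{lem:min_expoent} returns the estimate with the exponents exactly as written; once the representation is in place, what is left is a one‑variable integral and power counting.
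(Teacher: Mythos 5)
Your proposal is essentially the paper's own argument: after your change of variables $w=\tau_{0}+1/z$ your Hankel/circle contour is exactly the image of the paper's vertical line $\operatorname{Re}s=\gamma$ under $s\mapsto(4s)^{-1}+\sigma/2$, and the rest — Lemma \ref{lem:Phragm=0000E9n=002013Lindel=0000F6f} on that contour, Lemma \ref{lem:min_expoent} to make the decay uniform, the choice $\gamma\sim r^{2\beta/(\beta-2)}\sigma t^{-(2\beta-2)/(\beta-2)}$ forced by the hypothesis $4\gamma\sigma\lesssim1$, and the reduction of the unmollified bound to the mollified one via $P_{-\sigma}f$ — coincides with the paper's proof. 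The one substantive point you gloss over is absolute convergence of the contour integral at infinity (the integrand only carries $|s|^{-1/2}$ there): the paper handles this by an integration by parts in $s$, which is also where its $\gamma^{-5/2}$, hence the $\sigma^{-5/2}$ prefactor, actually comes from, whereas your route of trading a fraction of the exponential for $X^{-5/2}$ addresses the prefactor but not the convergence, so you would still need that integration by parts (or an equivalent decay mechanism on the contour) to make the representation legitimate.
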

\begin{proof}
The relation between heat and wave equation can be written as
\[
\exp(-s\lambda_{n})=\sqrt{\frac{1}{4\pi s}}\int_{-\infty}^{\infty}\cos(t\sqrt{\lambda_{n}})e^{-\frac{t^{2}}{4s}}dt.
\]
Changing some variables, we get
\[
\sqrt{\frac{\pi}{s}}\exp\left(-\frac{\lambda_{n}}{4s}\right)=\int_{0}^{\infty}\frac{1}{\sqrt{t}}\cos(\sqrt{\lambda_{n}t})e^{-st}dt.
\]
The inverse Laplace transform implies

\begin{eqnarray*}
\frac{1}{\sqrt{t}}\cos(\sqrt{\lambda_{n}t}) & = & \frac{1}{2\pi i}\int_{\gamma-i\infty}^{\gamma+i\infty}\sqrt{\frac{\pi}{s}}\exp\left(-\frac{\lambda_{n}}{4s}+st\right)ds.
\end{eqnarray*}
for any $\gamma>0$. The mollified cosine is
\begin{eqnarray*}
\left(\phi*\cos(\sqrt{\lambda_{n}}\cdot)\right)(t) & = & e^{-\frac{\sigma}{2}\lambda_{n}}\cos(\sqrt{\lambda_{n}}t)\\
 & = & \frac{t}{2\pi i}\int_{\gamma-i\infty}^{\gamma+i\infty}\sqrt{\frac{\pi}{s}}\exp\left(-\lambda_{n}(\frac{1}{4s}+\frac{\sigma}{2})+st^{2}\right)ds.
\end{eqnarray*}
Hence, the mollified solution can be computed as
\[
(\phi_{\sigma}*Wf)(t)=\frac{t}{2\pi i}\int_{\gamma-i\infty}^{\gamma+i\infty}\sqrt{\frac{\pi}{s}}e^{st^{2}}P_{(4s)^{-1}+\frac{\sigma}{2}}fds.
\]
Rewrite the equation by letting $u(z)=P_{(4z)^{-1}+\frac{\sigma}{4}}f$
and $v(t)=(\phi_{\sigma}*Wf)(t)$, we have

\begin{eqnarray}
v(t) & = & \frac{t}{2\sqrt{\pi}i}\int_{\gamma-i\infty}^{\gamma+i\infty}s^{-1/2}e^{st^{2}}u(\frac{1}{s^{-1}+\sigma})ds\nonumber \\
 & = & \frac{1}{4\sqrt{\pi}it}\int_{\gamma-i\infty}^{\gamma+i\infty}s^{-3/2}e^{st^{2}}u(\frac{1}{s^{-1}+\sigma})ds+\nonumber \\
 &  & \frac{-\sqrt{\pi}}{2\pi it}\int_{\gamma-i\infty}^{\gamma+i\infty}s^{-1/2}e^{st^{2}}u'(\frac{1}{s^{-1}+\sigma})\frac{1}{(\frac{1}{s}+\sigma)^{2}}\frac{1}{s^{2}}ds.\label{eq:lemma14_eq}
\end{eqnarray}
Using Lemma \ref{lem:Phragm=0000E9n=002013Lindel=0000F6f}, we have
\begin{eqnarray*}
|u(\frac{1}{s^{-1}+\sigma})| & \leq & \frac{C}{t^{\alpha}}\exp\left(-Cr^{\frac{\beta}{\beta-1}}\left|\frac{1}{s^{-1}+\sigma}\right|^{\frac{2-\beta}{\beta-1}}\text{Re}\frac{1}{s^{-1}+\sigma}\right)||f||_{1}.
\end{eqnarray*}
Using Lemma \ref{lem:min_expoent}, for $\frac{1}{\sigma}>4\gamma>0$
, we have
\begin{eqnarray*}
|u(\frac{1}{s^{-1}+\sigma})| & \leq & \frac{C}{t^{\alpha}}\exp\left(-Cr^{\frac{\beta}{\beta-1}}\gamma^{\frac{\beta}{2\beta-2}}\sigma^{\frac{\beta-2}{2\beta-2}}\right)||f||_{1}
\end{eqnarray*}
for $\text{Re}s=\gamma$. By the Cauchy integral formula, we have
\begin{eqnarray*}
|u'(\frac{1}{s^{-1}+\sigma})| & \leq & \frac{C|s^{-1}+\sigma|}{t^{\alpha}}\exp\left(-Cr^{\frac{\beta}{\beta-1}}\gamma^{\frac{\beta}{2\beta-2}}\sigma^{\frac{\beta-2}{2\beta-2}}\right)||f||_{1}\\
 & \leq & \frac{C}{t^{\alpha}\gamma}\exp\left(-Cr^{\frac{\beta}{\beta-1}}\gamma^{\frac{\beta}{2\beta-2}}\sigma^{\frac{\beta-2}{2\beta-2}}\right)||f||_{1}
\end{eqnarray*}
for $\text{Re}s=\gamma$. Substituting in \eqref{eq:lemma14_eq},
we get 
\begin{eqnarray*}
|v(t)| & \leq & \frac{Ce^{\gamma t}}{t^{\alpha+1}}(\int_{0}^{\infty}|\gamma+is|^{-3/2}ds+\frac{1}{\gamma}\int_{0}^{\infty}|\gamma+is|^{-5/2}ds)\exp(-Cr^{\frac{\beta}{\beta-1}}\gamma^{\frac{\beta}{2\beta-2}}\sigma^{\frac{\beta-2}{2\beta-2}})\\
 & \leq & \frac{Ce^{\gamma t}}{t^{\alpha+1}}(\frac{1}{\sqrt{\gamma}}+\frac{1}{\gamma^{5/2}})\exp(-Cr^{\frac{\beta}{\beta-1}}\gamma^{\frac{\beta}{2\beta-2}}\sigma^{\frac{\beta-2}{2\beta-2}})
\end{eqnarray*}
The first result follows from putting $\gamma=\frac{\left(\frac{\beta}{2\beta-2}\right)^{\frac{\beta-2}{2\beta-2}}r^{\frac{2\beta}{\beta-2}}\sigma}{t^{\frac{2\beta-2}{\beta-2}}}$.
The second result follows from the identity 
\[
\left(\phi*\cos(\sqrt{\lambda_{n}}\cdot)\right)(t)=e^{-\frac{\sigma}{2}\lambda_{n}}\cos(\sqrt{\lambda_{n}}t).
\]

\end{proof}

\section{Acknowledgment}

I would like to thank the Department of Mathematics in the Chinese
University of Hong Kong and Cornell University providing me supports
and chance to do research via joining in the Research Experiences
for Undergraduates Program. I am grateful to Prof Robert Strichartz
for his valuable discussions and suggestions. Also, he read the original
manuscript and suggested several improvements. Lastly, I would like
to thank Ms. Mavis Chan for her valuable suggestions.

\bibliographystyle{amsplain}
\nocite{*}
\bibliography{fractal}

\end{document}